\newtheorem{theorem}{Theorem}
\newtheorem{lemma}{Lemma}
\newtheorem{definition}{Definition}
\newtheorem{corollary}[lemma]{Corollary}
\newtheorem{remark}{Remark}
\newtheorem{fact}{Fact}
\newtheorem{note}{Note}
\newtheorem{conjecture}{Conjecture}
\newcommand{\reals}{{\mathbb{R}}}
\newcommand{\Prob}{{\mathbb{P}}}
\begin{document}
\title{Perturbation approach to scaled type Markov renewal processes with infinite mean.}

\author{Zsolt Pajor-Gyulai\footnote{Budapest University of Technology,Institute of Physics , \textit{pgyzs@math.bme.hu}} \and Domokos Sz\'asz\footnote{Budapest University of Technology, Mathematical Institute ,Budapest, Egry J. u. 1 Hungary H-1111, \textit{szasz@math.bme.hu} }}

\maketitle

\vskip1cm
\begin{abstract}
Scaled type Markov renewal processes generalize classical renewal processes: renewal times come from a one parameter family of probability laws and the sequence of the parameters is the trajectory of an ergodic Markov chain. Our primary interest here is the asymptotic distribution of the Markovian parameter at time $t \to \infty$. The limit, of course, depends on the stationary distribution of the Markov chain. The results, however, are essentially different depending on whether the expectations of the renewals are finite or infinite. If the expectations are uniformly bounded, then we can provide the limit in general (beyond the class of scaled type processes), where the expectations of the probability laws in question appear, too. If the means are infinite, then -- by assuming that the renewal times are rescaled versions of a regularly varying probability law with exponent $0 \le \alpha \le 1$ -- it is the exponent $\alpha$ which emerges in the limits.
\vskip5mm

\end{abstract}

\section{Introduction} Heavy tailed probability distributions have recently arisen in new interesting applications, it is sufficient to mention waiting times in queueing networks like the internet or stock prices. For us the laws with exponents $\alpha =\ 0 \ \textrm{and}\ \frac{1}{2}$ came into play in stochastic  models of physical phenomena as return times to the origin of processes which are proved to behave analogously to random walks on $\mathbb Z^d$ where $ d = 1\ \textit{or}\ 2$ (more concretely,  in stochastic paradigms of two Lorentz disks in the plane or in a quasi-one-dimensional slab, see a forthcoming article of the authors).

Markov renewal models are themselves interesting mathematical objects and, in particular, the model of scaled type renewal processes, suggested by the physical model, seems to be a fortunate notion. As to some history: Markov renewal processes (or Semi Markov processes) were independently introduced in 1954 by L\'evy \cite{L54}, Smith
\cite{S55} and Tak\'acs \cite{T54}.
The basic theory was developed by Pyke \cite{P61a}, \cite{P61b} and then further elaborated among others by Pyke and Schaufele \cite{PS64}, Cinlar \cite{C69}, \cite{C75}, Koroliuk and his coauthors. For a recent treatment see \cite{BL08} or the works of Jannsen \cite{J86},\cite{JL99}. Nevertheless, none of these authors seem to have addressed the situation when the waiting times have infinite means. Indeed, it is not at all clear how to formulate results in general. However, scaled type processes generated by a slowly varying law as suggested by our physical model (cf. Section 6) provide a suitable model for treating these questions.

In this paper we give a comprehensive answer for the most original primary question related to Markov Renewal processes: we determine the asymptotic distribution of the Markovian parameter at time $t \to \infty$. Our main interest is the case when the variables have infinite expectations and the process is of scaled type. To emphasize coherence, we also prove  -- by using our method -- results, already known for the finite mean case. We develop an operator formalism and use some facts from perturbation theory to develop a key lemma from which most of our results follow easily.

In the theory of ordinary renewal processes, the first attempts to extend the well known result of Feller and Smith (\cite{F71},\cite{S54}) to the infinite mean case were performed by Erickson(\cite{E70}), Teugels (\cite{T68}) and Anderson\&Athreya (\cite{AA87}).   As for the Markov renewal process, some partial results has been already obtained, e.g. in \cite{MY01}, many properties of the spent time (age-) process (see Sect 5.) were established under different assumptions for the alternating renewal process. In this paper we show that under certain assumptions, the classical result of Dynkin (\cite{D55}) still holds.

The paper is organized as follows. Section 2 contains our definitions and our key technical result. Sections 3, 4, 5 deal with its consequences while section 6 presents the physics application which drove our attention to the topic. Finally section 7 is devoted to the proofs of our theorems.
\begin{note}
We originally used the name Renewal Process directed by a Markov Chain (RPdMC), but we decided to stick to traditions and to use Markov renewal process. Some authors use the name Semi-Markov process for the whole phenomena, but we only refer to a particular process by this name.
\end{note}

\section{Definitions and basic results}

\subsection{Basic definitions and conditions}
Consider a measurable function $F_{\lambda}(t)\equiv F(\lambda,t):[a,b]\times\reals_+\to[0,1]$ with

\vspace{2mm}
\noindent\textbf{Basic assumptions:}
\begin{equation}\label{F_lambda_distri_func}
\textrm{For fixed $\lambda$, $F_{\lambda}(.)$ is a non-arithmetic distribution function}
\end{equation}
\begin{equation}\label{F_lambda_condition}
\exists\delta>0:\quad\sup_{\lambda\in[a,b]}F_{\lambda}(\delta)<1
\end{equation}
\begin{equation}\label{F_lambda_condition2}
\exists K\in\mathbb{R}_+:\quad\inf_{\lambda\in[a,b]}F_{\lambda}(K)>0
\end{equation}
%\begin{equation}\label{F_lambda_condition2}
%\exists K\in\reals_+:\quad\inf_{\lambda\in[a,b]}F_{\lambda}(K)>0
%\end{equation}
%\begin{equation}\label{F_lambda_condition2a}
%\textrm{$F_{\lambda}(x)$ is integrable function on $[a,b]\times\reals_+$}
%\end{equation}

We will need random variables $X_{\lambda}$ with distribution function $F_{\lambda}$. If $X_{\lambda}$ has expectation, then is denoted by $\mu_{\lambda}$.
\begin{remark}
\noindent Conditions \eqref{F_lambda_condition} and \eqref{F_lambda_condition2} implies that there is no sequence $(\lambda_i)_{i\geq 0}$
%\in[a,b]^{\mathbb{N}}$
that $X_{\lambda_i}$ would converge either to the point mass at zero or to infty in distribution (or - as the limit is non-random - in probability).
\end{remark}
\begin{definition}
The family of distributions defined above is called {\rm scaled-type} if there is a distribution function $F:\reals_+\to[0,1]$ for which
\[
F_{\lambda}(t)=F(\lambda t)
\]
\end{definition}

In this case, the basic assumptions are satisfied if $0<a\leq b<\infty$ and moreover if  $\mu=\int_0^{\infty}xdF(x)$ is finite , then $\mu_{\lambda}=\mu/\lambda$.

Also suppose we have a recurrent Harris chain (cf. e.g. Chapter 5.6 in \cite{D05}) ($\Lambda_0,\Lambda_1,...)$ on $[a,b]$ with transition kernel $g(\lambda_-,A)$. Suppose that this chain has a spectral gap, which means that the spectrum of its transition operator on $L_{\infty}([a,b])$ is bounded away from the unit circle except for the eigenvalue $1$ and  finitely many other eigenvalues on the unit circle. Let $\rho_s$ denote the stationary measure.

%Consider a family of non-arithmetic distribution functions $F_{\lambda}:$ $\lambda\in[a,b]$ with supported on $\mathbb R_{\ge 0}$. The corresponding random variables are denoted by $X_{\lambda}$. Later we will impose certain conditions on this family of distributions, but first the construction.

\begin{definition}

Suppose $(\lambda_0,\lambda_1, \lambda_2, \dots) \in [a, b]^\mathbb N$. Then the sequence $S_n = \sum_{j=0}^{n-1} X_{\lambda_j}:\ n = 1, 2, \dots$ with $S_0=0$ is called a {\rm Non-Homogeneous Renewal Process (NHRP)} if $X_{\lambda_0},X_{\lambda_1}, X_{\lambda_2}, \dots$ is an independent sequence of random variables such that $\forall j \in \mathbb N$ the distribution of $X_{\lambda_j}$ is $F_{\lambda_j}$. If furthermore $F_{\lambda}(t)=F(\lambda t)$ with some distribution function $F$, then we call the process a {\rm Scaled-type Renewal Process (STRP)}.

\end{definition}

\begin{definition}

The sequence $S_{n} = \sum_{j=0}^{n-1} X_{\Lambda_j}:\ n = 1, 2, \dots$ with the convention $S_{0}=0$ is called a {\rm Markov renewal process} if 
\begin{itemize}
\item $\Lambda_0, \Lambda_1, \Lambda_2, \dots$ is a homogeneous Markov chain introduced above and 
\item for every realization  $\lambda_0, \lambda_1, \dots$ of this Markov chain $S_n:\ n = 0, 1, 2, \dots$ is a non-homogeneous renewal process. 
\end{itemize}
(Notation: if we want to emphasize the dependence of the process on $\lambda_0$, we write $S_{n, \lambda_0}$.)
\end{definition}

%(This assumption was plausible in the physical model of \cite{PGY-SZ09} and here we will also accept this condition to simplify exposition.)

Consider a Markov renewal process and -- by complying with the classical renewal terminology -- let $N_{t,\lambda_0}$ denote the number of renewals that occurred before time $t$ (including the one at $t=0$) with initial parameter value $\lambda_0$, i.e.
\begin{equation}\label{felujitasok_szama}
N_{t,\lambda_0}=\inf\{n:S_{n, \lambda_0}> t\}
\end{equation}
and let $U_{\lambda_0}(t)= \mathbb E N_{t,\lambda_0}$. Denote the "type" of the  renewal ongoing at time $t$ by $\Lambda(t) = \Lambda_{N_{t,\lambda_0}-1}$ and the distribution of the parameter $\Lambda(t)$, conditioned on the initial parameter value $\lambda_0$, by $\Phi_{t,\lambda_0}$, i.e.
\begin{equation}\label{def_phi}
\Phi_{t,\lambda_0}(A)=\Prob(\Lambda(t)\in A\in\mathcal{B}([a,b])|\Lambda_0=\lambda_0)
\end{equation}
\begin{note}
$\Lambda(t)$ is a so called Semi-Markov process since it would be a continuous time Markov chain on $[a,b]$, if for every $\lambda$, $F_{\lambda}$ were an exponential distribution function.
\end{note}
By conditioning on the first renewal, the renewal equation writes as
\begin{align}\label{ren_eq_spec}
\Phi_{t,\lambda_0}(A)=&\mathbb{1}_{\{\lambda_0\in A\}}(1-F_{\lambda_0}(t))+\\
\nonumber &+\int_0^t\int_a^b\Phi_{t-s,\lambda_1}(A)g(\lambda_0,d\lambda_1) dF_{\lambda_0}(s)
\end{align}
%\begin{remark}
%For notational simplicity, we are going to write %$g(\lambda_i,\lambda_{i+1})d\lambda_{i+1}$ instead of $g(\lambda_i,d\lambda_{i+1})$ but it is obvious how to treat the general case. %When it is clear with respect to which variable is the integration carried out  %we write e.g. $dF_{\lambda_0}(s)$.

%\qu{vegulis ugy latom, altalaban atirtad a kernelt mertek-alakba, suruseg helyett, akkor tehat ezt ki lehet hagyni?}
%\end{remark}
All the basic phenomena are governed by equations like \eqref{ren_eq_spec}. Since this is not the usual renewal equation, we have to generalize standard renewal theory. Our first result is an existence and uniqueness theorem.
\begin{theorem}\label{ren_eq_unique}
For any measurable function $h_{.,\lambda_0}(A)$ which is bounded on bounded intervals, i.e.
\[
(\forall t>0)(\exists M_t<\infty):\qquad |h_{s,\lambda_0}(A)|<M_t\quad\forall s\in[0,t]
\]
the solution of equation
\begin{align}\label{ren_eq}
\Psi_{t,\lambda_0}(A)=h_{t,\lambda_0}(A)+\int_0^t\int_a^b\Psi_{t-s,\lambda_1}(A)g(\lambda_0,d\lambda_1) dF_{\lambda_0}(s)
\end{align}
exists and is unique among the functions that vanish for $t<0$ and are bounded on bounded intervals.

Moreover, the solution can be given as an infinite series:
\begin{align}\label{solution}
\Psi_{t,\lambda_0}&(A)=h_{t,\lambda_0}(A)+\\
\nonumber&+\sum_{n=1}^{\infty}\int_{[a,b]^n}\int_0^th_{t-s,\lambda_n}(A)d\bigg((\Pi^*)_{i=0}^{n-1}F_{\lambda_i}(s)\bigg)\prod_{i=0}^{n-1}g(\lambda_i,d\lambda_{i+1})
\end{align}
where $\Pi^*$ denotes the convolution product.
\end{theorem}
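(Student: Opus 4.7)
The plan is to prove existence by showing that the explicit series \eqref{solution} converges and solves \eqref{ren_eq}, and then to obtain uniqueness via iteration on the homogeneous version of \eqref{ren_eq}. The entire argument rests on a single quantitative estimate: the measure
\[
d\bigl((\Pi^*)_{i=0}^{n-1} F_{\lambda_i}(s)\bigr) \prod_{i=0}^{n-1} g(\lambda_i, d\lambda_{i+1}),
\]
integrated over $s \in [0,t]$ and $\lambda_1,\dots,\lambda_n \in [a,b]$ with $\lambda_0$ fixed, yields exactly $p_n(t,\lambda_0) := \Prob(S_{n,\lambda_0} \le t)$, and this quantity decays geometrically in $n$. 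Setting $p := 1 - \sup_{\lambda \in [a,b]} F_{\lambda}(\delta)$, which is strictly positive by \eqref{F_lambda_condition}, every increment $X_{\Lambda_j}$ stochastically dominates $\delta \cdot \mathrm{Bernoulli}(p)$, so $S_{n,\lambda_0}$ stochastically dominates $\delta$ times a $\mathrm{Bin}(n,p)$-random variable; hence $p_n(t,\lambda_0) \le \Prob(\mathrm{Bin}(n,p) \le t/\delta)$, which decays exponentially in $n$ for each fixed $t$.

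Granted this estimate, the $n$-th summand of \eqref{solution} is bounded in absolute value by $M_t \, p_n(t,\lambda_0)$, so the series converges absolutely and uniformly on bounded time intervals. To verify that it solves \eqref{ren_eq}, I would substitute \eqref{solution} into the right-hand side: the free term reproduces $h_{t,\lambda_0}(A)$, while the integral operator, applied term-by-term (the uniform bound together with Fubini justifies the interchange), prepends one extra integration in $(s,\lambda_1)$ to each summand; reindexing $n \mapsto n-1$ in the resulting sum reproduces the tail of \eqref{solution}.

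For uniqueness, given two bounded solutions $\Psi,\tilde\Psi$, the difference $\Delta := \Psi - \tilde\Psi$ satisfies the homogeneous version of \eqref{ren_eq}. Iterating this identity $n$ times gives
\[
|\Delta_{t,\lambda_0}(A)| \le \Bigl(\sup_{0 \le s \le t,\, \mu \in [a,b]} |\Delta_{s,\mu}(A)|\Bigr) \cdot p_n(t,\lambda_0),
\]
and letting $n \to \infty$ forces $\Delta \equiv 0$ on $[0,t]$ for every $t$. The main technical point is precisely the geometric decay of $p_n(t,\lambda_0)$, which is what assumption \eqref{F_lambda_condition} is tailored to deliver: without a uniform lower bound $p > 0$, one could construct Markov chain trajectories producing arbitrarily short waiting times and the iteration would fail to contract. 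Everything else is routine Fubini-style bookkeeping with the iterated convolutions and the Markov kernel.
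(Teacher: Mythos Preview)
Your proposal is correct and follows essentially the same route as the paper: iterate the homogeneous equation $n$ times to bound the difference of two solutions by a constant times $\Prob(S_{n,\lambda_0}\le t)$, send $n\to\infty$, and obtain existence from absolute convergence of the series \eqref{solution} via the same estimate. The only variation is in how $\Prob(S_{n,\lambda_0}\le t)\to 0$ is argued: the paper compares with the ordinary renewal process generated by the dominating distribution $\sup_{\lambda}F_{\lambda}$ and invokes finiteness of its renewal function, whereas you give a direct exponential bound by Binomial domination---a slightly more self-contained but equivalent argument.
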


This form of the solution is troublesome to work with, but we can also write
\begin{equation}\label{solution_nicer}
\Psi_{t,\lambda_0}(A)=\int_{[0,t]\times[a,b]}h_{t-s,\lambda}(A)U_{\lambda_0}(ds,d\lambda)
\end{equation}
where we introduced the functions
\begin{equation}\label{U(.,.)}
U_{\lambda_0}(t,A)=\mathbb{1}_{\{\lambda_0\in A\}}\Theta(t)+g(\lambda_0,A)F_{\lambda_0}(t)+
\end{equation}
\[
+\sum_{n=2}^{\infty}\int_{[a,b]^{n-1}}g(\lambda_{n-1},A)(\Pi^*)_{i=0}^{n-1}F_{\lambda_i}(t)\prod_{i=0}^{n-2}g(\lambda_i,d\lambda_{i+1})
\]
Here $\Theta(t)=1$ if $t>0$ and zero otherwise. This can be further written
\begin{equation}\label{Alt_def_U}
U_{\lambda_0}(t,A)=\mathbb{1}_A(\lambda_0)+\sum_{n=1}^{\infty}\Prob_{\lambda_0}(X_{\Lambda_0}+...+X_{\Lambda_{n-1}}<t;\Lambda_n\in A)
\end{equation}
so $U_{\lambda_0}(t,A)$ is the expected number of jumps into the set $A$ before time $t$ (plus 1 if the process is launched from A).
The integration in \eqref{solution_nicer} is wrt the measure defined by
\begin{align*}
U_{\lambda_0}(A\times [t_1,t_2])=U_{\lambda_0}(t_2,A)&-U_{\lambda_0}(t_1,A)
\end{align*}
for $A\in \mathcal{B}([a,b])$ and $0<t_1<t_2<\infty$.

Note that from \eqref{Alt_def_U} it is clear that $U_{\lambda_0}(t) = U_{\lambda_0}(t,[a,b])$.

\subsection{Laplace transforms}

Introduce the Laplace transform of $F$:
\[
\varphi_{\lambda}(z)=\int_0^{\infty}e^{-zx}dF_{\lambda}(x)=z\int_0^{\infty}e^{-zx}F_{\lambda}(x)dx\qquad z\geq 0
\]

From the last formula, it is easy to see that \eqref{F_lambda_condition} and \eqref{F_lambda_condition2} imply for $z>0$
\begin{equation}\label{Condition_cons}
\sup_{\lambda\in [a,b]}\varphi_{\lambda}(z)<1\qquad\inf_{\lambda\in[a,b]}\varphi_{\lambda}(z)>0
\end{equation}
In the scaled type case, $\varphi_{\lambda}(z)=\varphi(z/\lambda)$, where $\varphi(z)$ is the Laplace transform of the measure $dF(.)$
Also let
\begin{equation}\label{Laplace_U}
\omega_{\lambda_0}(z,A)=\int_{[0,\infty]\times A}e^{-zs}U_{\lambda_0}(ds,d\lambda)=\int_0^{\infty}e^{-zs}U_{\lambda_0}(ds,A)
\end{equation}

For fixed $z$, $\omega_{\lambda_0}(z,.)$ is, of course, a measure on $[a,b]$. By the virtue of \eqref{Alt_def_U} this  can be also written  as
\[
\omega_{\lambda_0}(z,A)=\mathbb{1}_A(\lambda_0)+\sum_{n=1}^{\infty}\mathbb{E}\left(e^{-z(X_{\Lambda_0}+...+X_{\Lambda_{n-1}})}\mathbb{1}_{\{\Lambda_n\in A\}}\Big|\Lambda_0=\lambda_0\right)
\]

Also let $\Xi_{\lambda_0}(z,A)$ be the Laplace transform of $\Psi_{t,\lambda_0}(A)$ in the variable $t$, i.e.
\[
\Xi_{\lambda_0}(z,A)=\int_0^{\infty}e^{-zt}\Psi_{t,\lambda_0}(A)dt
\]
Then by \eqref{solution_nicer}, Fubini's theorem, and the product rule of the Laplace transform,
\begin{equation}\label{Laplace_solution}
\Xi_{\lambda_0}(z,A)=\int_a^b\phi_{\lambda}(z,A)\omega_{\lambda_0}(z,d\lambda)%=\frac{1}{z}\underbrace{\int_A(1-\varphi_{\lambda}(z))d\tilde{\omega}_{\lambda_0}(z,\lambda)}_{W_{\lambda_0}(z,A)}
\end{equation}
where the integration is wrt the measure defined in \eqref{Laplace_U} and  $\phi_{\lambda}(z,A)$ is the Laplace transform of $h_{t,\lambda}(A)$, i.e.
\begin{equation}\label{Laplace_1-F}
\phi_{\lambda}(z,A)=\int_0^{\infty}e^{-zt}h_{t,\lambda}(A)dt%=\frac{1-\varphi_{\lambda}(z)}{z}
\end{equation}
In all the applications, $h$ is so, that this Laplace transform exists. Clearly, then $\Xi_{\lambda_0}(z,A)$ exists as well.
Despite its simplicity, it is not equation \eqref{Laplace_solution} which proves useful in the sequel. Instead, take the Laplace transform of \eqref{ren_eq} to obtain
\begin{equation}\label{ren_eq_laplace}
\Xi_{\lambda_0}(z,A)=\phi_{\lambda_0}(z,A)+\int_a^b\Xi_{\lambda_1}(z,A)\varphi_{\lambda_0}(z)g(\lambda_0,d\lambda_1)
\end{equation}
$\forall\lambda_0\in[a,b]$ and $\forall A\in\mathcal{B}([a,b])$.
\subsection{Key lemma}
Recall that
\begin{definition}
A positive function $L(t)$ defined on $ \mathbb R_{\ge 0} $ is slowly varying at infinity if
\[
\frac{L(ct)}{L(t)}\to 1\qquad\forall c\in\reals_{>0}
\]
\end{definition}

The key element in the treatment is the following
\begin{lemma}\label{key_lemma}
Whenever $\mu_{\lambda}<\infty$ for all $\lambda\in[a,b]$, or the process is scaled type with a regularly varying ancestor distribution, i.e.
\[
1-F(t)=t^{-\alpha}L(t)\qquad\alpha\in[0,1]
\]
where $L$ is a slowly varying function at infinity, we have
%If the family of distribution functions $F_{\lambda}$ satisfies condition \eqref{F_lambda_condition2}, then
\begin{equation}\label{Xi_asym}
\Xi_{\lambda_0}(z,A)\sim\frac{\int_a^b\phi_{\lambda}(z,A)d\rho_s(\lambda)}{\int_a^b(1-\varphi_{\lambda'}(z))d\rho_s(\lambda')} %\frac{1}{z}\tilde{W}(z,A)=\frac{1}{z}\frac{\int_A(1-\varphi_{\lambda}(z))d\rho_s(\lambda)}{\int_a^b(1-\varphi_{\lambda}(z))d\rho_s(\lambda')}\qquad z\to 0
\end{equation}
as $z\to 0$ provided that
\[
\limsup_{z\to 0}\frac{\sup_{\lambda\in[a,b]}\phi_{\lambda}(z,A)}{\int_a^b\phi_{\lambda}(z,A)d\rho_s(\lambda)}<\infty
\]
%where $\phi_{\lambda_0}(z)=\phi_{\lambda_0}(z,[a,b])$.
\end{lemma}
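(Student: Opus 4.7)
The plan is to recast the Laplace-transformed renewal equation \eqref{ren_eq_laplace} as an operator identity on $L_\infty([a,b])$ and then extract the $z\to 0^+$ singularity via analytic perturbation theory. Setting
\begin{equation*}
(T_z f)(\lambda_0) = \varphi_{\lambda_0}(z)\int_a^b f(\lambda_1)\, g(\lambda_0, d\lambda_1),
\end{equation*}
equation \eqref{ren_eq_laplace} reads $\Xi(z,A) = \phi(z,A) + T_z\Xi(z,A)$, and \eqref{Condition_cons} gives $\|T_z\|_\infty<1$ for $z>0$, hence $\Xi(z,A) = (I-T_z)^{-1}\phi(z,A)$. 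Writing $T_z=M_zT_0$ with $T_0$ the Markov transition operator and $M_z$ the multiplication operator by $\varphi_\cdot(z)$, one verifies $\sup_{\lambda\in[a,b]}|1-\varphi_\lambda(z)|\to 0$ in both regimes: in the finite-mean case from the bound $1-\varphi_\lambda(z)\le z\mu_\lambda$ (using the uniform boundedness of $\mu_\lambda$ on the compact $[a,b]$), and in the scaled-type regularly varying case from Karamata's Tauberian estimate $1-\varphi(z/\lambda)\sim \Gamma(1-\alpha)(z/\lambda)^\alpha L(\lambda/z)$, which is uniform for $\lambda\in[a,b]$. Thus $T_z\to T_0$ in operator norm.

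By the spectral-gap assumption, the eigenvalue $1$ of $T_0$ is simple and isolated, with right eigenfunction $\mathbf 1$ and left eigenmeasure $\rho_s$; its spectral projector is $P_0 f=(\int f\,d\rho_s)\mathbf 1$. The finitely many other unit-modulus eigenvalues of $T_0$ remain bounded away from $1$, and for $z>0$ small their perturbations stay inside the open unit disc and away from the bifurcated simple eigenvalue $\lambda(z)$ of $T_z$ near $1$. Kato's analytic perturbation theory then furnishes
\begin{equation*}
(I-T_z)^{-1} = \frac{P_z}{1-\lambda(z)} + R_z,
\end{equation*}
with $P_z\to P_0$ in operator norm and $R_z$ uniformly bounded as $z\to 0^+$. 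First-order perturbation yields
\begin{equation*}
1-\lambda(z) = \int_a^b (1-\varphi_\lambda(z))\,d\rho_s(\lambda) + O\!\left(\sup_\lambda (1-\varphi_\lambda(z))^2\right),
\end{equation*}
and the quadratic remainder is asymptotically negligible compared with the linear term because $\sup_\lambda(1-\varphi_\lambda(z))$ is itself infinitesimal and has the same order as $\int(1-\varphi_\lambda)\,d\rho_s$, thanks to $\lambda\mapsto\lambda^{-\alpha}$ being bounded above and below on $[a,b]$.

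Applying the decomposition to $\phi(z,A)$ and using $P_z\phi(z,A)(\lambda_0)=\int\phi_\lambda(z,A)\,d\rho_s(\lambda)+o\!\left(\sup_\lambda\phi_\lambda(z,A)\right)$, I obtain
\begin{equation*}
\Xi_{\lambda_0}(z,A)=\frac{\int_a^b \phi_\lambda(z,A)\,d\rho_s(\lambda)}{\int_a^b (1-\varphi_{\lambda'}(z))\,d\rho_s(\lambda')}\,(1+o(1)) + O\!\left(\sup_\lambda \phi_\lambda(z,A)\right),
\end{equation*}
and the relative size of the remainder is
\begin{equation*}
O\!\left(\frac{\sup_\lambda\phi_\lambda(z,A)}{\int_a^b \phi_\lambda(z,A)\,d\rho_s(\lambda)}\cdot\int_a^b(1-\varphi_\lambda(z))\,d\rho_s(\lambda)\right)\to 0
\end{equation*}
by the $\limsup$ hypothesis on $\phi$ and the fact that $\int(1-\varphi)\,d\rho_s\to 0$. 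The main technical obstacle will be the borderline regime $\alpha=0$, where $1-\varphi(z)$ decays only as a slowly varying function of $z$: one must confirm that the spectral gap of $T_0$ on $L_\infty$ is preserved by the multiplicative perturbation $M_z$ uniformly in $z$ near $0$, so that Kato's expansions for $\lambda(z)$ and $P_z$ remain valid, and that the linear perturbation term still dominates despite the very slow convergence $T_z\to T_0$.
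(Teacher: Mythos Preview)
Your proposal is correct and follows essentially the same route as the paper: you rewrite \eqref{ren_eq_laplace} as $(I-\hat\varphi(z)P)\Xi=\phi$, invoke Kato perturbation of the simple eigenvalue $1$ of $P$ under the multiplicative perturbation $M_z$, and read off the leading singularity $(1-\lambda(z))^{-1}P_0$; the paper does exactly this via the decomposition $I-\hat\varphi(z)P=(I-P)+(I-\hat\varphi(z))P$ and the spectral formula \eqref{spectral_dec}, identifying $1-\lambda(z)$ with $(\rho_s,(I-\hat\varphi(z))\mathbb{1})$ through the computation \eqref{compute_c}. Your worry about $\alpha=0$ is unfounded: norm convergence $T_z\to T_0$ suffices for Kato's expansion regardless of rate, and the quadratic remainder $O(\sup_\lambda(1-\varphi_\lambda)^2)$ is dominated by the linear term because in the scaled-type case $\sup_\lambda(1-\varphi_\lambda)/\inf_\lambda(1-\varphi_\lambda)$ stays bounded on $[a,b]$, so $\sup^2/\!\int\le C\sup\to 0$.
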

\begin{remark}\label{Lemma1_utan}
The main idea behind Lemma \ref{key_lemma} is that the asymptotic behaviour is independent of the initial state. Thus the asymptotic formulas must be identical with what would be exact if the distribution of $\lambda_0$ was the stationary one.
\end{remark}
\subsection{The operator formalism}
In the proof of Lemma \ref{key_lemma}, we use a perturbation approach in the framework of an operator formalism developed in this section.

As usual, let $L_{\infty}[a,b]$ denote the set of bounded, measurable functions on $[a,b]$. The transition operator of the Markov chain (defined by the kernel $g$ of the previous section), denoted by $P$, operates on this space by
\[
Pf(\lambda)=\int_a^b f(\lambda')g(\lambda,d\lambda')\qquad f\in L_{\infty}([a,b]),\lambda\in[a,b]
\]
Of course, on the adjoint space $\mathcal{M}([a,b])$ (i.e. every signed measure on $[a,b]$ of finite total variation) its effect is given by
\[
\mu P(A)=\int_a^bg(\lambda,A)d\mu(\lambda)\qquad\mu\in\mathcal{M}([a,b]),A\in\mathcal{B}([a,b])
\]

Also define the operator valued functions $\hat{\varphi}(z)$, $\hat{\phi}(z)$, and $\hat{\Xi}(z)$ acting on $L_{\infty}([a,b])$ by
\[
(\hat{\varphi}(z)f)(\lambda)=\varphi_{\lambda}(z)f(\lambda)\qquad (\hat{\phi}(z)\mathbb{1}_A)(\lambda)=\phi_{\lambda}(z,A)
\]
\[
(\hat{\Xi}(z)\mathbb{1}_A)(\lambda)=\Xi_{\lambda}(z,A)
\]
where $f\in L_{\infty}([a,b])$, $\mathbb{1}_A$ is the indicator function of A. In the last two definitions, the operators are defined on the  linear span of step functions in $L_{\infty}([a,b])$.

With these, it can be easily seen that equation \eqref{ren_eq_laplace} is equivalent to the operator equation
\[
\hat{\Xi}(z)=\hat{\phi}(z)+\hat{\varphi}(z) P\hat{\Xi}(z)
\]
This yields the formal solution
\begin{equation}\label{form_sol}
\hat{\Xi}(z)=(I-\hat{\varphi}(z) P)^{-1}\hat{\phi}(z)
\end{equation}
Condition \eqref{Condition_cons} ensures the existence of the inverse for every $z>0$, since $||\hat{\varphi}(z)||=\sup_{\lambda\in[a,b]}\varphi_{\lambda}(z)<1$.

Denote the effect of a measure $\mu$ in $\mathcal{M}([a,b])$ as a functional on an element $f$ of $L_{\infty}([a,b])$ with $(\mu,f)$, i.e.
\[
(\mu,f)=\int_{[a,b]}fd\mu
\]
and note that e.g.
\[
\Xi_{\lambda_0}(z,A)=(\delta_{\lambda_0},\hat{\Xi}(z)\mathbb{1}_A)
\]
where $\delta_{\lambda_0}$ is the point mass concentrated on $\lambda_0$.
In this framework, Lemma 1 can be rephrased as
\begin{lemma}\label{key_lemma2}
Suppose $\mu_{\lambda}<\infty$ $\forall\lambda\in[a,b]$ or that
\[
1-F_{\lambda}(t)=1-F(\lambda t)=(\lambda t)^{-\alpha}L(\lambda t)\qquad\alpha\in[0,1]
\]
where $L$ is a slowly varying function. Then if
\begin{equation}\label{lemma_condition}
\limsup_{z\to 0}\frac{||\hat{\phi}(z)||}{(\rho_s,\hat{\phi}(z)\mathbb{1}_A)}<\infty
\end{equation}
then
\[
(\delta_{\lambda_0},\hat{\Xi}(z)\mathbb{1}_A)\sim \frac{(\rho_s,\hat{\phi}(z)\mathbb{1}_A)}{(\rho_s,(I-\hat{\varphi}(z))\mathbb{1})}\qquad z\to 0
\]
where $\mathbb{1}=\mathbb{1}_{[a,b]}$.
\end{lemma}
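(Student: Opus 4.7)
The plan is to start from the formal solution $\hat{\Xi}(z)=(I-\hat{\varphi}(z)P)^{-1}\hat{\phi}(z)$ given in \eqref{form_sol} and to analyse the singular behaviour of the resolvent $(I-\hat{\varphi}(z)P)^{-1}$ as $z\downarrow 0$ by perturbation theory around the unperturbed operator $P$. Under either hypothesis, $\sup_{\lambda\in[a,b]}(1-\varphi_{\lambda}(z))\to 0$ as $z\to 0$ (trivially in the bounded-mean case via Jensen, and because $\varphi_{\lambda}(z)=\varphi(z/\lambda)$ on a compact set in the scaled regularly varying case), so $\hat{\varphi}(z)P\to P$ in operator norm on $L_{\infty}([a,b])$. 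Since $1$ is a simple isolated eigenvalue of $P$ with right eigenvector $\mathbb{1}$ and left eigenvector $\rho_{s}$, and the remaining spectrum is either on the unit circle (finitely many points, all away from $1$) or strictly inside a disc of radius $<1$, a Riesz-projection / Neumann-series argument produces, for $z$ small, a simple eigenvalue $\kappa(z)\to 1$ of $\hat{\varphi}(z)P$ with right/left eigenvectors $u(z)\to\mathbb{1}$, $\nu(z)\to\rho_{s}$, and a one-dimensional spectral projection $\Pi(z)f=(\nu(z),f)u(z)$ (normalised so $(\nu(z),u(z))=1$). The complementary spectral piece stays bounded away from $1$, so one obtains
\[
(I-\hat{\varphi}(z)P)^{-1}=\frac{\Pi(z)}{1-\kappa(z)}+S(z),\qquad \sup_{0<z\le z_{0}}\|S(z)\|<\infty.
\]

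The next step is to pin down $1-\kappa(z)$ by first-order perturbation for a simple eigenvalue:
\[
\kappa(z)-1=(\rho_{s},(\hat{\varphi}(z)-I)P\mathbb{1})+O(\|\hat{\varphi}(z)-I\|^{2})=-(\rho_{s},(I-\hat{\varphi}(z))\mathbb{1})+O(\|\hat{\varphi}(z)-I\|^{2}),
\]
where $P\mathbb{1}=\mathbb{1}$ was used. The quadratic remainder is $o\bigl((\rho_{s},(I-\hat{\varphi}(z))\mathbb{1})\bigr)$ in both regimes: in the bounded-mean case the two sides are $\Theta(z^{2})$ and $\Theta(z)$ via $1-\varphi_{\lambda}(z)\sim \mu_{\lambda}z$; in the scaled regularly varying case both $\|\hat{\varphi}(z)-I\|$ and $(\rho_{s},(I-\hat{\varphi}(z))\mathbb{1})$ are of order $z^{\alpha}L(1/z)$ by Karamata's Tauberian theorem, so the ratio of the square to the first power is $z^{\alpha}L(1/z)\to 0$. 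This yields $1-\kappa(z)\sim (\rho_{s},(I-\hat{\varphi}(z))\mathbb{1})$.

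Applying the decomposition to $\hat{\phi}(z)\mathbb{1}_{A}$ and pairing with $\delta_{\lambda_{0}}$,
\[
(\delta_{\lambda_{0}},\hat{\Xi}(z)\mathbb{1}_{A})=\frac{u(z)(\lambda_{0})\,(\nu(z),\hat{\phi}(z)\mathbb{1}_{A})}{1-\kappa(z)}+(\delta_{\lambda_{0}},S(z)\hat{\phi}(z)\mathbb{1}_{A}).
\]
Since $u(z)(\lambda_{0})\to 1$ and $|(\nu(z)-\rho_{s},\hat{\phi}(z)\mathbb{1}_{A})|\le \|\nu(z)-\rho_{s}\|\cdot\|\hat{\phi}(z)\|=o(\|\hat{\phi}(z)\|)$, which by \eqref{lemma_condition} is $o\bigl((\rho_{s},\hat{\phi}(z)\mathbb{1}_{A})\bigr)$, the leading numerator is asymptotic to $(\rho_{s},\hat{\phi}(z)\mathbb{1}_{A})$. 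The remainder $(\delta_{\lambda_{0}},S(z)\hat{\phi}(z)\mathbb{1}_{A})$ is $O(\|\hat{\phi}(z)\|)=O\bigl((\rho_{s},\hat{\phi}(z)\mathbb{1}_{A})\bigr)$, strictly smaller than the singular term by the factor $1-\kappa(z)\to 0$. Combining these ingredients produces the claimed equivalence. I expect the main technical obstacle to be the rigorous deployment of the Riesz-projection/Neumann-series perturbation on the non-reflexive space $L_{\infty}([a,b])$ directly from the abstract spectral gap hypothesis, together with the control of the quadratic eigenvalue correction at the boundary case $\alpha=1$, where the asymptotics of $1-\varphi(z)$ need de Haan-type refinements of Karamata's theorem rather than the bare Tauberian statement.
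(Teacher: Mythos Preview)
Your proposal is correct and follows essentially the same perturbation-theoretic route as the paper, which likewise invokes Kato's theorem on perturbation of the simple isolated eigenvalue $1$ of $P$ to split $(I-\hat{\varphi}(z)P)^{-1}$ into a rank-one singular piece plus a uniformly bounded remainder, and then identifies the small eigenvalue by pairing with $\rho_s$ and $\mathbb{1}$ exactly as you do. Regarding your anticipated obstacle at $\alpha=1$: the paper resolves it not via de Haan refinements but by the elementary observation (Feller, XIII.5, p.~280) that $H(t)=\int_0^t(1-F(s))\,ds$ is itself slowly varying, whence $1-\varphi(z)\sim zH(1/z)$, and your quadratic-remainder argument then goes through verbatim with $z^{\alpha}L(1/z)$ replaced by $zH(1/z)\to 0$.
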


\begin{conjecture}
Lemma \ref{key_lemma2} is likely to be true under the somewhat milder condition that $\mu_{\rho_s}=\int_a^b\mu_{\lambda}d\rho_s(\lambda)<\infty$, which allows $\mu_{\lambda}$ to be infinite on a $\rho_s$-negligible set if $\hat{\phi}(z)$ is nice in some sense. The ground of this suggestion is that a $\rho_s$-negligible set cannot have large effect on asymptotic relations. This question does not arise in the scaled type case, so we do not pursuit it in the sequel. However, we mention 
\end{conjecture}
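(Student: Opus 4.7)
The plan is to reduce the conjecture to Lemma~\ref{key_lemma2} by a truncation-and-limit argument. Writing $N=\{\lambda\in[a,b]:\mu_\lambda=\infty\}$ (with $\rho_s(N)=0$ by assumption) and $B_M=\{\lambda:\mu_\lambda\le M\}$, I would replace $\varphi_\lambda$ on $[a,b]\setminus B_M$ by a fixed auxiliary Laplace transform $\tilde{\varphi}(z)$ with unit mean, producing an operator $\hat{\varphi}^M(z)$ to which Lemma~\ref{key_lemma2} applies directly and yielding an approximate solution $\hat{\Xi}^M(z)$. The conjecture will follow once the truncation error is shown to be asymptotically negligible after sending $z\to 0$ and $M\to\infty$ in that order, using $\mu_\lambda\in L^1(\rho_s)$.

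The denominator of the target asymptotic requires no special treatment: the identity $\frac{1-\varphi_\lambda(z)}{z}=\int_0^\infty e^{-zx}(1-F_\lambda(x))\,dx$ is monotone increasing as $z\searrow 0$, so monotone convergence gives $(\rho_s,(I-\hat{\varphi}(z))\mathbb{1})/z\nearrow\mu_{\rho_s}$, depending only on $\rho_s$-integrability of $\mu_\lambda$. The truncated denominator converges to $\int_{B_M}\mu_\lambda\,d\rho_s+\rho_s(B_M^c)$, which tends to $\mu_{\rho_s}$ as $M\to\infty$ by monotone convergence and $\rho_s(N)=0$. The bulk of the work is in the numerator: expanding as the Neumann series $\hat{\Xi}(z)-\hat{\Xi}^M(z)=\sum_n[(\hat{\varphi}(z)P)^n-(\hat{\varphi}^M(z)P)^n]\hat{\phi}(z)$, each term depends only on paths that hit $B_M^c$, so
\[
\bigl|(\delta_{\lambda_0},[(\hat{\varphi}P)^n-(\hat{\varphi}^MP)^n]\hat{\phi}\mathbb{1}_A)\bigr|\le 2\|\hat{\phi}(z)\|_\infty\Prob_{\lambda_0}(\{\Lambda_0,\dots,\Lambda_{n-1}\}\cap B_M^c\neq\emptyset).
\]
Using the spectral gap of $P$ to approximate hitting probabilities by $n\rho_s(B_M^c)$, and the damping $\prod_i\varphi_{\Lambda_i}(z)\le e^{-cnz}$ available on paths staying in $B_M$, the summed error is $O(\rho_s(B_M^c)/z^2\cdot\|\hat{\phi}(z)\|_\infty)$.

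The main obstacle is the quantitative relation between $M$ and $z$. Compared to the leading asymptotic of order $\|\hat{\phi}(z)\|/z$, the error is small only when $\rho_s(B_M^c)/z\to 0$; Markov's inequality gives $\rho_s(B_M^c)\le\mu_{\rho_s}/M$, so one must couple the limits with $M\gg 1/z$. This in turn requires the Lemma~\ref{key_lemma2} asymptotic to hold \emph{uniformly} in $M$, which demands that the spectral gap of $P$ and the constant in condition \eqref{lemma_condition} be insensitive to the truncation. A natural ``niceness'' condition on $\hat{\phi}$ would be equicontinuity of $\lambda\mapsto\phi_\lambda(z,A)$ near $N$, together with $\sup_\lambda\phi_\lambda(z,A)=O((\rho_s,\hat{\phi}(z)\mathbb{1}_A))$ uniformly in $z$, ruling out $\hat{\phi}(z)$ concentrating mass on $N$ as $z\to 0$. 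Without such a hypothesis, pathological $\hat{\phi}$ supported on a shrinking neighborhood of $N$ could invalidate the conjecture.
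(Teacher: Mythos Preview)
This statement is labeled a \emph{conjecture} in the paper and is not proved there; the authors explicitly say they ``do not pursue it in the sequel.'' What the paper does supply is a partial confirmation, Corollary~\ref{key_lemma_corollary}, under the concrete niceness hypothesis that the annihilator $A_{\rho_s}=\{f:(\rho_s,f)=0\}$ is an invariant subspace of $\hat{\phi}(z)$ for every $z$. Their argument for that corollary is structural rather than quantitative: invariance of $A_{\rho_s}$ under $\hat{\phi}(z)$ propagates to $\hat{\Xi}(z)=(I-\hat{\varphi}(z)P)^{-1}\hat{\phi}(z)$, so one may replace $\mathbb{1}_A$ by $\mathbb{1}_{A\cap\mu_\infty^c}$ and effectively restrict the whole problem to the subspace of functions vanishing on the bad set $\mu_\infty=\{\lambda:\mu_\lambda=\infty\}$, after also moving the initial point $\lambda_0$ off $\mu_\infty$ (legitimate because $\rho_s(\mu_\infty)=0$). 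No truncation, no error term, no coupling of parameters.

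Your route is genuinely different: a truncation-and-limit scheme followed by a Neumann-series error estimate. It is more quantitative but also more fragile, and the difficulties you identify are real gaps, not merely technicalities. First, forcing $\rho_s(B_M^c)/z\to 0$ requires $M\gg 1/z$, which in turn demands the Lemma~\ref{key_lemma2} asymptotic uniformly in $M$; but the Kato perturbation argument behind that lemma degrades as $\sup_\lambda\mu_\lambda^{(M)}$ grows, so a uniform $z$-window cannot be expected. Second, the damping $\prod_i\varphi_{\Lambda_i}(z)\le e^{-cnz}$ ``on paths staying in $B_M$'' is of no help for the \emph{difference} $(\hat{\varphi}P)^n-(\hat{\varphi}^MP)^n$, since on exactly those paths the two products coincide; the paths that matter visit $B_M^c$, where no uniform damping rate is available from the untruncated $\varphi_\lambda$. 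Your sketch therefore lands at the same qualitative endpoint as the paper---some hypothesis on $\hat{\phi}$ is unavoidable---but does not recover or sharpen Corollary~\ref{key_lemma_corollary}, and the specific error bound $O(\rho_s(B_M^c)\,\|\hat{\phi}(z)\|_\infty/z^2)$ would need substantial reworking before it could be made rigorous.
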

\begin{corollary}\label{key_lemma_corollary}
If the annihilator of $\rho_s$ i.e.
\[
A_{\rho_s}=\{f\in L_{\infty}([a,b]):(\rho_s,f)=0\}
\]
is an invariant subspace of $\hat{\phi}(z)$ for every $z$, then the assertion of Lemma \ref{key_lemma2} holds if  $\mu_{\lambda}=\infty$ only on a $\rho_s$-null set.
\end{corollary}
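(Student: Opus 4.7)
The plan is to reduce to Lemma~\ref{key_lemma2} by truncating $F_\lambda$ on the $\rho_s$-null set $N := \{\lambda \in [a,b] : \mu_\lambda = \infty\}$. Fix any distribution $\tilde F$ satisfying \eqref{F_lambda_condition}--\eqref{F_lambda_condition2} with finite mean, set $F_\lambda' := F_\lambda$ for $\lambda \notin N$ and $F_\lambda' := \tilde F$ for $\lambda \in N$, and attach primes to the resulting operators $\hat\varphi', \hat\phi', \hat\Xi'$. Since the modified family has $\mu_\lambda' < \infty$ for every $\lambda$, Lemma~\ref{key_lemma2} applies to it and yields
\[
(\delta_{\lambda_0}, \hat\Xi'(z)\mathbb{1}_A) \sim \frac{(\rho_s, \hat\phi'(z)\mathbb{1}_A)}{(\rho_s, (I-\hat\varphi'(z))\mathbb{1})}, \quad z \to 0.
\]
Because $\hat\varphi - \hat\varphi'$ and $\hat\phi - \hat\phi'$ are multiplications by functions supported on $N$ and $\rho_s(N) = 0$, both $\rho_s$-pairings on the right coincide with those for the unprimed system; thus the right-hand side is already the one claimed in Lemma~\ref{key_lemma2} for the original problem.

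It remains to control the difference $(\delta_{\lambda_0},(\hat\Xi(z) - \hat\Xi'(z))\mathbb{1}_A)$. The resolvent identity applied to \eqref{form_sol} gives
\[
\hat\Xi(z) - \hat\Xi'(z) = (I - \hat\varphi(z) P)^{-1}\bigl[(\hat\varphi(z) - \hat\varphi'(z))P\hat\Xi'(z) + (\hat\phi(z) - \hat\phi'(z))\bigr],
\]
whose bracketed input is again supported on $N$ and therefore lies in the subspace $A_{\rho_s}$. The spectral gap of $P$, with principal eigenvector $\mathbb{1}$ and left eigenmeasure $\rho_s$, combined with standard rank-one perturbation theory for $\hat\varphi(z)P$ as $z \to 0$, decomposes $(I - \hat\varphi(z) P)^{-1}$ into a singular piece proportional to $\tfrac{1}{(\rho_s,(I-\hat\varphi(z))\mathbb{1})}\,\mathbb{1}\otimes\rho_s$ plus a bounded remainder; any input in $A_{\rho_s}$ annihilates the rank-one $\rho_s$-factor, so the difference is $O(1)$ as $z \to 0$, which is negligible compared with the main term that diverges at the rate $1/(\rho_s,(I-\hat\varphi(z))\mathbb{1})$.

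The hypothesis that $A_{\rho_s}$ is invariant under $\hat\phi(z)$ is what keeps this spectral decomposition legitimate despite the failure of $\hat\varphi(z)$ to vary smoothly at $z = 0$ on $N$: in the Neumann expansion of $(I - \hat\varphi(z) P)^{-1}\hat\phi(z)$ that underlies the proof of Lemma~\ref{key_lemma2}, invariance confines the $\hat\phi(z)$-generated corrections to $A_{\rho_s}$, preventing them from coupling back into the divergent $\mathbb{1}$-direction through the singular behavior of $\hat\varphi(z)$ on $N$. The principal technical obstacle is making this uniform boundedness of the resolvent on $A_{\rho_s}$ rigorous when $1 - \varphi_\lambda(z)$ fails to be $O(z)$ on the null set; the invariance is precisely what stops the $\rho_s$-null pathology from propagating into the leading asymptotic.
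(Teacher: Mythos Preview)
Your approach---modify $F_\lambda$ on the null set $N$, apply Lemma~\ref{key_lemma2} to the primed system, and control the discrepancy via the resolvent identity---is genuinely different from the paper's. The paper instead argues that the invariance hypothesis lets one pass from $\mathbb{1}_A$ to $\mathbb{1}_{A\cap\mu_\infty^c}$ at no cost, assumes (after possibly waiting one step) that $\lambda_0\notin\mu_\infty$, and then effectively restricts the whole analysis to the subspace of functions vanishing on $\mu_\infty$, where Lemma~\ref{key_lemma2} applies directly. No comparison of two resolvents is made.

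There is, however, a concrete gap in your argument. After the resolvent identity you need the unprimed resolvent $(I-\hat\varphi(z)P)^{-1}$ to be uniformly bounded on $A_{\rho_s}$ as $z\to 0$. But this is exactly the content of the spectral decomposition in the proof of Lemma~\ref{key_lemma2}, and that decomposition was obtained from Kato's theorem using the expansion $(I-\hat\varphi(z))P=z\mu P+o(z)$, which fails on $N$ since $\mu_\lambda=\infty$ there. You acknowledge this as ``the principal technical obstacle'' and then appeal to the invariance hypothesis, but the explanation is not a proof: in your resolvent-identity step the only fact used is that functions supported on $N$ lie in $A_{\rho_s}$, which follows from $\rho_s(N)=0$ alone and has nothing to do with $\hat\phi(z)$ preserving $A_{\rho_s}$. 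Moreover, even granting boundedness of the resolvent on $A_{\rho_s}$, the bracketed input is not $O(1)$: it contains $(\hat\varphi(z)-\hat\varphi'(z))P\hat\Xi'(z)\mathbb{1}_A$, and $\|\hat\Xi'(z)\mathbb{1}_A\|_\infty$ already blows up at the rate of the main term. At best you would get $o(\text{main term})$, provided you can show $\sup_{\lambda\in N}|\varphi_\lambda(z)-\tilde\varphi(z)|\to 0$, which itself needs a uniform tightness assumption on $\{F_\lambda:\lambda\in N\}$ that is not among the hypotheses. The paper's reduction avoids all of this by never invoking the unprimed resolvent on the bad set; that is precisely what the invariance of $A_{\rho_s}$ under $\hat\phi(z)$ buys.
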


%\begin{remark}
%Note that $\alpha>1$ is not possible because it would imply zero mean for the positive random variables $X_{\lambda}$, and therefore yield that they are identically zero with probability one.
%\end{remark}
\section{Generalization of the renewal theorem (asymptotics of $U_{\lambda_0}(t,A)$}
In this section, we investigate the asymptotic behaviour of $U_{\lambda_0}(t,A)$. To do this, note that \eqref{solution_nicer} implies that if $h_{t,\lambda_0}(A)=\mathbb{1}_A(\lambda_0)$, then we have $\Psi_{t,\lambda_0}(A)=U_{\lambda_0}(t,A)$ and also $\phi_{\lambda}(z,A)=\mathbb{1}_A(\lambda)/z$. The assumption \eqref{lemma_condition} is satisfied if $\rho_s(A)>0$. Then Lemma \ref{key_lemma} yields
\begin{equation}\label{Lemma1_to_U}
\omega_{\lambda_0}(z,A)-\mathbb{1}_A(\lambda_0)\sim \frac{1}{\int_a^b(1-\varphi_{\lambda}(z))d\rho_s(\lambda)}\rho_s(A)\qquad z\to 0
\end{equation}
(note the difference between $\omega$ and $\Xi$!). Thus we have
\begin{theorem}\label{Thm_U_asym}
For $A\in\mathcal{B}([a,b])$ with $\rho_s(A)>0$, we have for $\mu_{\rho_s}<\infty$ that
\begin{equation}\label{finite_U_asym}
U_{\lambda_0}(t,A)\sim \frac{t}{\mu_{\rho_s}}\rho_s(A)
\qquad \qquad t \to \infty
\end{equation}
while in the scaled type case for $\alpha\in[0,1)$,
\[
U_{\lambda_0}(t,A)\sim \frac{t^{\alpha}}{L(t)}\frac{\sin(\pi\alpha)/\pi\alpha}{\int_a^b\lambda^{-\alpha}d\rho_s(\lambda)}\rho_s(A)=\frac{\sin(\pi\alpha)/\pi\alpha}{1-F(t)}\frac{\rho_s(A)}{\int_a^b\lambda^{-\alpha}d\rho_s(\lambda)}%\frac{\rho_s(A)}{1-\varphi(z)}\frac{1-\alpha}{\int_a^b\lambda^{-\alpha}d\rho_s(\lambda)}
\]
Note that if $\alpha=0$, the last factor is one. When $\alpha=1$, one obtains
\[
U_{\lambda_0}(t,A)\sim\frac{t}{\tilde{L}(t)}\frac{\rho_s(A)}{\int_a^b\lambda^{-1}d\rho_s(\lambda)}
\]
where $\tilde{L}=\int_0^t(1-F(s))ds$ varies slowly and  $U_{\lambda_0}(t,A)(1-F(t))\to 0$ in addition.
\end{theorem}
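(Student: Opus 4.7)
\emph{Plan.} The theorem reduces, via Lemma~\ref{key_lemma} with $h_{t,\lambda_0}(A)=\mathbb 1_A(\lambda_0)$, to a Tauberian inversion. Indeed, plugging this choice of $h$ into \eqref{solution_nicer} gives $\Psi_{t,\lambda_0}(A)=U_{\lambda_0}(t,A)$, its Laplace transform is $\phi_\lambda(z,A)=\mathbb 1_A(\lambda)/z$, and the condition \eqref{lemma_condition} reads $1/\rho_s(A)<\infty$, which is granted by the hypothesis $\rho_s(A)>0$. Lemma~\ref{key_lemma} therefore yields \eqref{Lemma1_to_U} for the Laplace--Stieltjes transform $\omega_{\lambda_0}(z,A)$ of the non-decreasing function $t\mapsto U_{\lambda_0}(t,A)$. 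From this point the task is to identify the small-$z$ behaviour of $\int_a^b(1-\varphi_\lambda(z))\,d\rho_s(\lambda)$ case by case and invert via Karamata's Tauberian theorem.

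\emph{Small-$z$ asymptotics of the denominator.} If all $\mu_\lambda$ are finite, then $1-\varphi_\lambda(z)\sim\mu_\lambda z$ uniformly on $[a,b]$, so the denominator is $\sim\mu_{\rho_s}z$. In the scaled-type case $\varphi_\lambda(z)=\varphi(z/\lambda)$, and the classical Tauberian correspondence between regularly varying tails and Laplace transforms (Feller Vol.~II, \S XIII.5; Bingham--Goldie--Teugels) gives
\[
1-\varphi(z)\sim\Gamma(1-\alpha)\,z^\alpha L(1/z)\qquad(\alpha\in[0,1)),
\]
degenerating to $1-\varphi(z)\sim L(1/z)$ when $\alpha=0$ and replaced by $1-\varphi(z)\sim z\tilde L(1/z)$, $\tilde L(t)=\int_0^t(1-F(s))\,ds$, when $\alpha=1$. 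The uniform convergence theorem for regularly varying functions, applied on the compact $[a,b]$, allows me to replace $1-\varphi(z/\lambda)$ by $\lambda^{-\alpha}(1-\varphi(z))$ uniformly and then integrate against $\rho_s$: the denominator becomes $\Gamma(1-\alpha)z^\alpha L(1/z)\int_a^b\lambda^{-\alpha}d\rho_s$ for $\alpha\in[0,1)$ and $z\tilde L(1/z)\int_a^b\lambda^{-1}d\rho_s$ for $\alpha=1$.

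\emph{Tauberian inversion.} Substituting into \eqref{Lemma1_to_U}, $\omega_{\lambda_0}(z,A)$ is asymptotic to a function of the form $z^{-\alpha}\ell(1/z)$ with $\ell$ slowly varying. Karamata's Tauberian theorem, applicable since $t\mapsto U_{\lambda_0}(t,A)$ is non-decreasing, then delivers
\[
U_{\lambda_0}(t,A)\sim\frac{t^\alpha\ell(t)}{\Gamma(\alpha+1)}.
\]
In the finite-mean case this is $t\rho_s(A)/\mu_{\rho_s}$. For $\alpha\in(0,1)$ the prefactor $1/[\Gamma(1-\alpha)\Gamma(\alpha+1)]$ simplifies to $\sin(\pi\alpha)/(\pi\alpha)$ by the reflection formula $\Gamma(\alpha)\Gamma(1-\alpha)=\pi/\sin(\pi\alpha)$ combined with $\Gamma(\alpha+1)=\alpha\Gamma(\alpha)$, matching the claim; the $\alpha=0$ limit $\sin(\pi\alpha)/(\pi\alpha)\to 1$ is consistent with the direct $\alpha=0$ computation. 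For $\alpha=1$ one gets $t\rho_s(A)/[\tilde L(t)\int\lambda^{-1}d\rho_s]$ since $\Gamma(2)=1$.

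\emph{The side claim and main obstacle.} The assertion $U_{\lambda_0}(t,A)(1-F(t))\to 0$ at $\alpha=1$ reduces after cancellation to $L(t)/\tilde L(t)\to 0$, a standard de~Haan fact valid whenever $\int^\infty L(s)/s\,ds=\infty$ (which is the standing assumption here, since otherwise $\mu<\infty$). The one delicate point I expect is the uniform-in-$\lambda$ control needed to interchange the $z\to 0$ limit with the $\rho_s$-integration of $1-\varphi(z/\lambda)$; this is precisely the content of the uniform convergence theorem for regularly varying functions on compact sets of $(0,\infty)$, and the verification of its applicability is what deserves most care. Everything else is bookkeeping around Karamata.
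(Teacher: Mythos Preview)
Your proposal is correct and follows the paper's proof almost verbatim: reduce to \eqref{Lemma1_to_U} via Lemma~\ref{key_lemma}, compute the small-$z$ behaviour of $\int_a^b(1-\varphi_\lambda(z))\,d\rho_s(\lambda)$ case by case, and invert with Karamata (the paper's Facts~\ref{Taub1}--\ref{Taub3}); the reflection-formula simplification and the $\alpha=1$ side claim are handled the same way.

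Two minor technical differences are worth noting. In the finite-mean case you assert $1-\varphi_\lambda(z)\sim\mu_\lambda z$ \emph{uniformly} in $\lambda$, which is not guaranteed by the basic assumptions; the paper avoids this by observing that $\int_a^b\varphi_\lambda\,d\rho_s$ is the Laplace transform of the $\rho_s$-mixture of the $F_\lambda$'s, so the integrated asymptotic $\int_a^b(1-\varphi_\lambda(z))\,d\rho_s\sim\mu_{\rho_s}z$ follows from a single application of the one-variable expansion (equivalently, Fubini plus monotone convergence). For the interchange in the scaled-type case you invoke the uniform convergence theorem for regularly varying functions on compacta, whereas the paper uses bounded convergence for $\alpha\in(0,1)$ and, for $\alpha=0$, the elementary sandwich $1-\varphi(az)\le\int(1-\varphi_\lambda(z))\,d\rho_s\le 1-\varphi(bz)$ together with slow variation of $1-\varphi$. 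Both routes work; your UCT argument is slightly cleaner and uniform across $\alpha\in[0,1)$, while the paper's is more self-contained.
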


\begin{remark}
If $\rho_s(A)=0$, then $\Prob(\Lambda_n\in A)<C_Ae^{-\gamma n}$, where $\gamma$ is the spectral gap of the Markov chain. Thus by \eqref{Alt_def_U}, we have the estimate
\[
U_{\lambda_0}(t,A)<1+\sum_{n=1}^{\infty}\Prob(\Lambda_n\in A)<\frac{C_A}{1-e^{\gamma}}
\]
where the last inequality implies that only finitely many times does the chain jump to $A$ as $t\to\infty$ with probability one.
\end{remark}

\section{Asymptotic results for $\Phi_{t,\lambda_0}(A)$}
In this special case $h_{t,\lambda}(A)=\mathbb{1}_{\{\lambda\in A\}}(1-F_{\lambda}(t))$, \eqref{solution_nicer} becomes
\begin{equation}\label{solution_spec}
\Phi_{t,\lambda_0}(A)=\int_{[0,t]\times A}(1-F_{\lambda}(t-s))dU_{\lambda_0}(s,\lambda)
\end{equation}
and $\phi_{\lambda}(z,A)=\mathbb{1}_{\{\lambda\in A\}}(1-\varphi_{\lambda}(z))/z$. Thus
\[
\Xi_{\lambda_0}(z,A)\sim\frac{1}{z}\frac{\int_A(1-\varphi_{\lambda}(z))d\rho_s(\lambda)}{\int_a^b(1-\varphi_{\lambda'}(z)))d\rho_s(\lambda')}\qquad z\to 0
\]

Here \eqref{lemma_condition} is satisfied of for every $\lambda_0$
\begin{equation}\label{Phi_felt}
\liminf_{z\to 0}\int_A\frac{1-\varphi_{\lambda}(z)}{1-\inf_{\lambda\in[a,b]}\varphi_{\lambda}(z)}d\rho_s(\lambda)>0
\end{equation}
which holds if $\rho_s(A)>0$. To see this note that in the finite mean case
\eqref{Condition_cons} ensures that $\inf_{\lambda\in[a,b]}\mu_{\lambda}>0$ and \eqref{Phi_felt} flollows from the asymptotic expansion of the $\varphi$'s. In the scaled type case note that the integral in \eqref{Phi_felt} admits the lower bound
\[
\rho_s(A)\frac{1-\varphi(az)}{1-\varphi(bz)}\geq\rho_s(A)\frac{a}{b}>0
\]
due to concavity of $1-\varphi$.
 Our result is
\begin{theorem}\label{fin_exp_asym}
For $A\in\mathcal{B}([a,b])$ with $\rho_s(A)>0$, we have
\begin{equation}\label{finite_exp_markov}
\lim_{t\to\infty}\Phi_{t,\lambda_0}(A)=\frac{1}{\mu_{\rho_s}}\int_A\mu_{\lambda}d\rho_s(\lambda)\qquad\forall\lambda_0\in [a,b]
\end{equation}
if $\mu_{\rho_s}<\infty$. In the scaled type, finite mean case, this becomes
\begin{equation}\label{regular_var_markov}
\lim_{t\to\infty}\Phi_{t,\lambda_0}(A)=\frac{\int_A\frac{1}{\lambda}d\rho_s(\lambda)}{\int_a^b\frac{1}{\lambda'}d\rho_s(\lambda')}\qquad\forall\lambda_0\in [a,b]
\end{equation}
%\noindent In the case of infinite expectations, we cannot say much without more assumptions about the $F_{\lambda}$-s.
%\begin{lemma}\label{Finite_looser}
%If there are both finite and infinite ones among the $\mu_{\lambda}$-s, then for $A=\{\lambda:\mu_{\lambda}<\infty\}$
%\[
%\lim_{t\to\infty}\Phi_{t,\lambda_0}(A)=0
%\]
%\end{lemma}
%In the scaled type case, however, the following result is almost straightforward if we recall that
If in the scaled type case $1-F(t)=t^{-\alpha}L(t)$, we have
\[
\lim_{t\to\infty}\Phi_{t,\lambda_0}(A)=\frac{\int_A\lambda^{-\alpha}d\rho_s(\lambda)}{\int_a^b\lambda'^{-\alpha}d\rho_s(\lambda')}
\]
which implies that in the special case $\alpha=0$, the limit is just $\rho_s(A)$.
\end{theorem}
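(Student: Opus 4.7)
The setup just before the statement already reduces the problem, via Lemma \ref{key_lemma}, to
\[
z\,\Xi_{\lambda_0}(z, A) \sim \frac{\int_A (1-\varphi_\lambda(z))\, d\rho_s(\lambda)}{\int_a^b (1-\varphi_{\lambda'}(z))\, d\rho_s(\lambda')}, \qquad z \to 0^+,
\]
once the hypothesis of that lemma is checked, which the discussion around \eqref{Phi_felt} does as soon as $\rho_s(A) > 0$. The proof therefore splits into two tasks: (i) compute the $z \to 0$ behaviour of $1 - \varphi_\lambda(z)$ in each of the three regimes, and (ii) promote the resulting Laplace-transform asymptotic to a pointwise $t \to \infty$ limit for $\Phi_{t, \lambda_0}(A)$.

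For task (i), in the finite-mean regime the elementary expansion $1 - \varphi_\lambda(z) = \mu_\lambda z + o(z)$ holds uniformly in $\lambda \in [a, b]$, so the $z$-factors cancel and the ratio tends to $\mu_{\rho_s}^{-1} \int_A \mu_\lambda\, d\rho_s(\lambda)$; this specialises to \eqref{regular_var_markov} in the scaled sub-case via $\mu_\lambda = \mu/\lambda$. For a regularly varying tail $1 - F(t) = t^{-\alpha} L(t)$, Karamata's Tauberian theorem yields
\[
1 - \varphi(u) \sim \Gamma(1-\alpha)\, u^\alpha L(1/u), \qquad u \to 0^+,
\]
with the usual conventions at the boundary values of $\alpha$. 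Combined with $\varphi_\lambda(z) = \varphi(z/\lambda)$ and the slow variation of $L$, one obtains $1 - \varphi_\lambda(z) \sim \Gamma(1-\alpha) \lambda^{-\alpha} z^\alpha L(1/z)$, so the common factor $z^\alpha L(1/z)$ cancels between numerator and denominator and only the ratio of $\lambda^{-\alpha}$-integrals against $\rho_s$ survives; the case $\alpha = 0$ then degenerates to $\rho_s(A)$, as claimed.

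Task (ii) is the main technical point. The relation $z\,\Xi_{\lambda_0}(z, A) \to c(A)$ is, a priori, only an Abelian / Cesaro-averaging statement for the bounded function $\Phi_{t, \lambda_0}(A)$. To upgrade it to a pointwise limit I would use the representation \eqref{solution_spec},
\[
\Phi_{t, \lambda_0}(A) = \int_{[0, t] \times A} (1 - F_\lambda(t - s))\, U_{\lambda_0}(ds, d\lambda),
\]
combined with the renewal asymptotic of Theorem \ref{Thm_U_asym}. In the finite-mean case, that theorem says $U_{\lambda_0}(ds, d\lambda)$ behaves like $\mu_{\rho_s}^{-1}\,ds \otimes d\rho_s(\lambda)$ for large $s$; since $s \mapsto 1 - F_\lambda(s)$ is directly Riemann integrable uniformly in $\lambda$ by \eqref{F_lambda_condition}--\eqref{F_lambda_condition2} with integral $\mu_\lambda$, a Markov-renewal key-renewal argument reproduces $\mu_{\rho_s}^{-1} \int_A \mu_\lambda\, d\rho_s(\lambda)$. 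In the infinite-mean scaled case with $\alpha \in [0, 1)$, $U_{\lambda_0}(t, A)$ grows like $t^\alpha / L(t)$ while $1 - F_\lambda(t - s)$ decays at the same slowly varying rate, so the slowly varying factors again cancel and only the ratio of $\lambda^{-\alpha}$ integrals remains.

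The main obstacle I expect is the key-renewal step in task (ii): unlike the classical one-dimensional renewal theorem, one needs a Markov-renewal Blackwell-type estimate that accommodates a $\lambda$-dependent integrand and, in the infinite-mean regime, a sharp convolution estimate for regularly varying tails against the sub-linearly growing measure $U_{\lambda_0}$. The non-arithmeticity hypothesis \eqref{F_lambda_distri_func} together with the spectral gap of the driving chain are precisely what make the required uniform control feasible; modulo this, all three conclusions of the theorem follow by assembling task (i) with Theorem \ref{Thm_U_asym}.
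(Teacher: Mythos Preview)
Your task (i) agrees with the paper: the small-$z$ behaviour of $1-\varphi_\lambda(z)$ is expanded in each regime exactly as the authors do (finite mean $\Rightarrow \mu_\lambda z$, Fact~\ref{Taub3} for $\alpha\in[0,1)$, and the integrated-tail slowly varying function for $\alpha=1$), and Lemma~\ref{key_lemma} with the verification around \eqref{Phi_felt} feeds this into the ratio.

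Where you diverge is task (ii). The paper does \emph{not} go back to the representation \eqref{solution_spec} and invoke a Markov-renewal key renewal theorem against $U_{\lambda_0}(ds,d\lambda)$. Instead it stays on the Laplace side: from $\Xi_{\lambda_0}(z,A)\sim K(A)/z$ it applies the Tauberian Fact~\ref{Taub1} directly to the measure $\int_0^t\Phi_{s,\lambda_0}(A)\,ds$, obtaining the Ces\`aro statement
\[
\frac{1}{t}\int_0^t\Phi_{s,\lambda_0}(A)\,ds\ \longrightarrow\ K(A),
\]
and then asserts that this is equivalent to pointwise convergence of $\Phi_{t,\lambda_0}(A)$ ``by simple arguments''. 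The infinite-mean scaled case is dispatched by saying it is ``essentially similar to the proof of Theorem~\ref{Thm_U_asym}''. So the paper's route is shorter but leaves precisely the step you flag --- the Ces\`aro-to-pointwise upgrade --- as an unspecified aside.

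Your alternative, working through \eqref{solution_spec} and Theorem~\ref{Thm_U_asym}, is legitimate and more constructive, but it trades that aside for a genuine Markov-renewal key-renewal / Blackwell estimate (with $\lambda$-dependent integrand, and in the heavy-tailed regime a regularly varying convolution estimate), which is not developed anywhere in the paper and which you correctly identify as the main obstacle. In short: same Laplace asymptotics, different Tauberian endgame; the paper's is quicker but sketchier, yours is more transparent but demands machinery the paper never builds.
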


\begin{remark}
\eqref{finite_exp_markov} and \eqref{regular_var_markov} are true for $\rho_s(A)=0$ as well, since $\Phi_{t,\lambda_0}$ is a measure (Apply the result to $A^c$).
\end{remark}

\section{Results for the age process and the residual and total lifetimes}
Let $Y_{t,\lambda_0}$ denote the time since the last renewal occurred and $Z_{t,\lambda_0}$ is the remaining time until the next renewal, i.e.
\[
Y_{t,\lambda_0}=t-S_{N_{t,\lambda_0}}\qquad Z_{t,\lambda_0}=S_{N_{t,\lambda_0}+1}-t
\]
The total lifetime is the sum $C_{t,\lambda_0}=Y_{t,\lambda_0}+Z_{t,\lambda_0}$.

It is easy to see, that $\Prob(Y_{t,\lambda_0}<x)\mathbb{1}_A(\lambda_0)$ satisfies \eqref{ren_eq} with the inhomogeneous term $h_{t,\lambda}^Y(A)=\mathbb{1}_{[0,x]}(t)\mathbb{1}_A(\lambda) (1-F_{\lambda}(t))$. Of course, in the end we will set $A=[a,b]$, but now we need the dependence on $A$ to make $\hat{\phi}$ a linear operator. This yields
\[
\phi_{\lambda}^Y(z,A)=\mathbb{1}_A(\lambda)\int_0^xe^{-zt}(1-F_{\lambda}(t))dt
\]
Since  we can use the bounded convergence theorem for fixed x, we have
\[
\phi_{\lambda}^Y(z,A)\to\mathbb{1}_A(\lambda)\int_0^x(1-F_{\lambda}(t))dt>0\qquad z\to 0
\]
and therefore by Lemma \ref{key_lemma} (since \eqref{lemma_condition} is automatically satisfied),
\[
\Xi_{\lambda_0}^Y(z,[a,b])\sim\frac{\int_a^b\int_0^x(1-F_{\lambda}(t))dtd\rho_s(\lambda)}{\int_a^b(1-\varphi_{\lambda}(z))d\rho_s(\lambda)}
\]

It is also not hard to obtain that $\Prob(Z_{t,\lambda_0}<x)\mathbb{1}_A(\lambda_0)$ also satisfies \eqref{ren_eq} with $h_{t,\lambda}^Z(A)=\mathbb{1}_A(\lambda)(F_{\lambda}(t+x)-F_{\lambda}(t))$, and after some calculation, we get
\[
\lim_{z\to 0}\phi_{\lambda}^Z(z,A)=\lim_{z\to 0}\phi_{\lambda}^Y(z,A)
\]
so $\Xi_{\lambda_0}^Z(z,[a,b])\sim\Xi_{\lambda_0}^Y(z,[a,b])$.

As to $C_{\lambda_0,t}$, one can obtain $h_{t,\lambda}^C(A)=\mathbb{1}_A(\lambda)\mathbb{1}_{[0,x]}(t)(F_{\lambda}(x)-F_{\lambda}(t))$, thus for the Laplace transform $\phi_{\lambda}^C(z,A)\to\mathbb{1}_A(\lambda)\int_0^x(F_{\lambda}(x)-F_{\lambda}(t))dt$ as $z\to 0$ and
\[
\Xi_{\lambda_0}^C(z,[a,b])\sim\frac{\int_a^b\int_0^x(F_{\lambda}(x)-F_{\lambda}(t))d\rho_s(\lambda)}{\int_a^b(1-\varphi_{\lambda}(z))d\rho_s(\lambda)}
\]
\begin{theorem}\label{age_lif_thm}
If $\mu_{\rho_s}<\infty$ then
\[
\left.\begin{array}{c}
\Prob(Y_{t,\lambda_0}<x)\\
\Prob(Z_{t,\lambda_0}<x)
\end{array}
\right\}\to\frac{1}{\mu_{\rho_s}}\int_0^x\int_a^b(1-F_{\lambda}(t'))d\rho_s(\lambda)dt'
\]
and
\[
\Prob(C_{t,\lambda_0}<x)\to\frac{1}{\mu_{\rho_s}}\int_0^x\int_a^b(F_{\lambda}(x)-F_{\lambda}(t'))d\rho_s(\lambda)
\]
\end{theorem}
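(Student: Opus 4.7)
The plan is to finish the Laplace-transform computation begun in the paragraph preceding the statement and then invert. Under $\mu_{\rho_s}<\infty$, writing $1-\varphi_\lambda(z)=z\int_0^\infty e^{-zt}(1-F_\lambda(t))\,dt$ and applying dominated convergence (the integrand is bounded by $1-F_\lambda(t)$, whose integral is $\mu_\lambda$, and $\int_a^b\mu_\lambda\,d\rho_s(\lambda)=\mu_{\rho_s}<\infty$) gives
\[
\int_a^b (1-\varphi_\lambda(z))\,d\rho_s(\lambda) \sim z\mu_{\rho_s},\qquad z\to 0^+.
\]
Bounded convergence also yields $\int_a^b\phi^Y_\lambda(z,[a,b])\,d\rho_s(\lambda)\to\int_a^b\int_0^x(1-F_\lambda(t'))\,dt'\,d\rho_s(\lambda)$ and the analogues for $\phi^Z,\phi^C$. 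Plugging these into the equivalences derived from Lemma \ref{key_lemma} just above the statement, one obtains $\Xi^Y_{\lambda_0}(z,[a,b])\sim L_Y/z$ with $L_Y$ the claimed limit for $Y$, and similarly $\Xi^Z_{\lambda_0}(z,[a,b])\sim L_Y/z$ (matching the $Y$ limit) and $\Xi^C_{\lambda_0}(z,[a,b])\sim L_C/z$.

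To convert these Laplace asymptotics into genuine $t\to\infty$ limits, rather than forcing a direct Hardy--Littlewood--Karamata argument (monotonicity of $t\mapsto\Pr(Y_{t,\lambda_0}<x)$ is not manifest), I would return to the representation \eqref{solution_nicer}:
\[
\Pr(Y_{t,\lambda_0}<x)=\int_{[0,t]\times[a,b]}h^Y_{t-s,\lambda}([a,b])\,U_{\lambda_0}(ds,d\lambda),
\]
and analogously for $Z$ and $C$. Each $h^\star_{u,\lambda}([a,b])$ is bounded and either supported in $u\in[0,x]$ (for $Y$ and $C$) or satisfies $\int_0^\infty h^Z_{u,\lambda}\,du=\int_0^x(1-F_\lambda(t'))\,dt'<\infty$ (for $Z$). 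Combining Theorem \ref{Thm_U_asym} with a Blackwell-type local refinement, namely $U_{\lambda_0}([t,t+c]\times A)\to c\rho_s(A)/\mu_{\rho_s}$ (to be derived from the spectral gap of the chain and the non-arithmeticity condition \eqref{F_lambda_distri_func}), the standard key-renewal argument -- splitting $[0,t]$ into a bulk piece where $U_{\lambda_0}$ is close to its asymptotic density and a tail piece where the integrand is small -- yields the three claimed limits.

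The main obstacle is the Blackwell-type upgrade of Theorem \ref{Thm_U_asym} from a Cesaro-type statement to a local density statement. The averaged asymptotic $U_{\lambda_0}(t,A)\sim t\rho_s(A)/\mu_{\rho_s}$ by itself does not control convolutions against arbitrary bounded integrands; one needs the spectral gap of the Markov chain (used in the excerpt only through Lemma \ref{key_lemma}) together with \eqref{F_lambda_distri_func} to control the oscillations of $U_{\lambda_0}([t,t+c]\times A)$ uniformly in $t$. Once this local version is in hand, substitution of $h^Y,h^Z,h^C$ and straightforward evaluation of the resulting integrals produce the three displayed limits.
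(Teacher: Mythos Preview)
Your Laplace-transform computation and the resulting asymptotics $\Xi^\star_{\lambda_0}(z,[a,b])\sim L_\star/z$ are exactly what the paper does: its entire proof of this theorem is the one-line reference ``through the same procedure as in the proof of Theorem~\ref{fin_exp_asym} in the finite mean case.'' That procedure is precisely your first paragraph (expand $1-\varphi_\lambda(z)\sim \mu_\lambda z$, integrate against $\rho_s$, plug into Lemma~\ref{key_lemma}).

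Where you diverge is in the inversion step. The paper does \emph{not} go back to \eqref{solution_nicer} or invoke any Blackwell-type local statement. Instead, following the proof of Theorem~\ref{fin_exp_asym}, it applies Fact~\ref{Taub1} to $\Xi^\star_{\lambda_0}(z,[a,b])\sim L_\star/z$ to obtain the Ces\`aro limit $t^{-1}\int_0^t \Prob(Y_{s,\lambda_0}<x)\,ds\to L_Y$ (and similarly for $Z,C$), and then declares this ``equivalent to what we are seeking by simple arguments.'' Your worry that pointwise monotonicity in $t$ is not manifest is entirely legitimate; the paper simply does not spell out how the Ces\`aro-to-pointwise passage is made. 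Your proposed Blackwell/key-renewal route is a correct and more honest way to close this gap, but it imports a local renewal theorem for Markov renewal processes that the paper never states or proves. So: same start, different (and heavier) finish; your route is sound but costs you an extra theorem, while the paper's route is lighter but leaves the ``simple arguments'' unexplained.
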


When the expectations of the waiting times are infinite, there is no proper asymptotic distribution of $Y_{t,\lambda_0}$, all the mass escapes to infinity. Instead, $Y_{t,\lambda_0}/t$ has a limit distribution. The following results are generalizations of the one due to Dynkin about ordinary renewal processes. (Cf. \cite{F71} XIV.3).
\begin{theorem}\label{age_lif_thm2}
If $1-F(t)=t^{-\alpha}L(t)$ with $0<\alpha<1$ in the scaled type case, $Y_{t,\lambda_0}/t$ converges in distribution to the distribution with density function
\[
\frac{\sin(\pi\alpha)}{\pi}x^{-\alpha}(1-x)^{\alpha-1}
\]
while the limit density function of $Z_{t,\lambda_0}/t$ is
\[
\frac{\sin(\pi\alpha)}{\pi}\frac{1}{x^{\alpha}(1+x)}
\]
In the $\alpha=0$ case,
\[
\frac{Y_{t,\lambda_0}}{t}\stackrel{\mathcal{P}}{\to} 1\qquad\frac{Z_{t,\lambda_0}}{t}\stackrel{\mathcal{P}}{\to} 1\qquad t\to\infty
\]
In the $\alpha=1$ case we can only state
\[
\frac{Y_{t,\lambda_0}}{t}\stackrel{\mathcal{P}}{\to} 0\qquad\frac{Z_{t,\lambda_0}}{t}\stackrel{\mathcal{P}}{\to} 0\qquad t\to\infty
\]
\end{theorem}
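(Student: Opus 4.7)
The plan is to adapt Dynkin's classical argument to the Markov-renewal setting by combining two ingredients: the explicit integral representation of the laws of $Y_{t,\lambda_0}$ and $Z_{t,\lambda_0}$ coming from \eqref{solution_nicer} with the choices of $h^Y$ and $h^Z$ recorded in Section 5, and the precise asymptotics of the renewal measure $U_{\lambda_0}(t,d\lambda)$ furnished by Theorem \ref{Thm_U_asym}.

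In the generic case $0<\alpha<1$, taking $A=[a,b]$ and $y=xt$ in the representation of $\Prob(Y_{t,\lambda_0}<y)$ gives
\[
\Prob(Y_{t,\lambda_0}<xt) \;=\; \int_{(1-x)t}^{t}\int_a^b(1-F_\lambda(t-s))\,dU_{\lambda_0}(s,d\lambda).
\]
I would rescale $s=t\sigma$ and invoke: first, the regular variation $1-F_\lambda(t(1-\sigma))=\lambda^{-\alpha}(t(1-\sigma))^{-\alpha}L(\lambda t(1-\sigma))$ with $L(\lambda t(1-\sigma))/L(t)\to 1$ uniformly in $\lambda\in[a,b]$ and in $\sigma$ on compact subsets of $[0,1)$; second, the ``infinitesimal form'' of Theorem \ref{Thm_U_asym}, namely that $dU_{\lambda_0}(t\sigma,d\lambda)$ behaves at scale $s=t\sigma$ like $\frac{t^\alpha}{L(t)}\cdot\frac{\sin(\pi\alpha)/\pi}{\int_a^b\lambda'^{-\alpha}d\rho_s(\lambda')}\,\sigma^{\alpha-1}d\sigma\otimes d\rho_s(\lambda)$. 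After these substitutions the factors $t^{\pm\alpha}$, $L(t)^{\pm 1}$ and $\int\lambda^{-\alpha}d\rho_s$ all cancel and I obtain
\[
\lim_{t\to\infty}\Prob(Y_{t,\lambda_0}<xt) \;=\; \frac{\sin(\pi\alpha)}{\pi}\int_0^x\tau^{-\alpha}(1-\tau)^{\alpha-1}\,d\tau,
\]
whose derivative is the advertised density. The residual $Z_{t,\lambda_0}$ is handled identically from $\Prob(Z_{t,\lambda_0}>xt)=\int_0^t\int_a^b(1-F_\lambda(t-s+xt))\,dU_{\lambda_0}(s,d\lambda)$; the same scaling produces $\frac{\sin(\pi\alpha)}{\pi}\int_0^1\sigma^{\alpha-1}(1-\sigma+x)^{-\alpha}d\sigma$, and the substitution $u=(1-\sigma+x)/(1+x)$ turns this into $\frac{\sin(\pi\alpha)}{\pi}\int_{x/(1+x)}^1(1-u)^{\alpha-1}u^{-\alpha}du$, whose derivative in $x$ equals $\frac{\sin(\pi\alpha)}{\pi}\cdot\frac{1}{x^\alpha(1+x)}$.

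The boundary values $\alpha\in\{0,1\}$ must be handled separately, as both target laws are Dirac. For $\alpha=0$ I would show $\Prob(Y_{t,\lambda_0}<xt)\to 0$ for every $x<1$ (and analogously for $Z_{t,\lambda_0}$) by using that Theorem \ref{Thm_U_asym} with $\alpha=0$ makes $U_{\lambda_0}(t,[a,b])\sim 1/(1-F(t))$ slowly varying, so the renewal mass contained in the window $((1-x)t,t)$ is $o(U_{\lambda_0}(t))$ and a direct estimate on the (slowly varying) integrand suffices. For $\alpha=1$ I would exploit the supplementary statement $U_{\lambda_0}(t)(1-F(t))\to 0$ included in Theorem \ref{Thm_U_asym} to bound the tail integrals on windows $(\varepsilon t,t)$ and deduce the convergence in probability to $0$.

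The chief obstacle is step (ii) of the main computation: upgrading the pointwise asymptotic $U_{\lambda_0}(t,A)\sim\frac{t^\alpha}{L(t)}\cdot\mathrm{const}\cdot\rho_s(A)$ to a vague convergence of the rescaled measure $dU_{\lambda_0}(t\sigma,d\lambda)/U_{\lambda_0}(t,[a,b])$ on $[0,1]\times[a,b]$, uniformly enough to integrate against the integrand $(1-\sigma)^{-\alpha}\lambda^{-\alpha}$ which is singular at $\sigma=1$. This should follow from Karamata's monotone density theorem applied to the regularly varying $s\mapsto U_{\lambda_0}(s,[a,b])$, combined with the $\rho_s$-factorization of the renewal measure furnished by the spectral gap of the Markov chain via Lemma \ref{key_lemma}. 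Integrability at $\sigma=1$ is automatic for $\alpha<1$, which is precisely why the boundary cases $\alpha\in\{0,1\}$ need an independent argument.
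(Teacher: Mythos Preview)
Your plan is essentially the paper's own proof: represent $\Prob(tx_1<Y_{t,\lambda_0}<tx_2)$ as an integral against $U_{\lambda_0}(t\,dy,d\lambda)$, normalize by $(1-F(t))U_{\lambda_0}(t)$ via Theorem~\ref{Thm_U_asym}, and pass to the limit using regular variation of $1-F$ and convergence of the rescaled renewal measure. The only technical difference is in how you justify the vague convergence $U_{\lambda_0}(t\,dy,d\lambda)/U_{\lambda_0}(t)\to \alpha y^{\alpha-1}\,dy\,d\rho_s(\lambda)$: you propose Karamata's monotone density theorem, whereas the paper computes the Laplace transform $\omega_{\lambda_0}(z/t,A)/U_{\lambda_0}(t)\to \Gamma(\alpha+1)z^{-\alpha}\rho_s(A)$ directly from \eqref{Lemma1_to_U} and Fact~\ref{Taub1}; either route works and they are essentially dual.

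One small caution on your $\alpha=0$ sketch: the observation that the renewal mass in $((1-x)t,t)$ is $o(U_{\lambda_0}(t))$ is correct but by itself does not give $\Prob(Y_{t,\lambda_0}<xt)\to 0$, because near $s=t$ the integrand $1-F_\lambda(t-s)$ is not small and $o(U_{\lambda_0}(t))$ may still diverge. It is cleaner either to estimate the complementary probability $\Prob(Y_{t,\lambda_0}\ge xt)=\int_0^{(1-x)t}(1-F_\lambda(t-s))\,dU_{\lambda_0}$, where the integrand is uniformly $\sim 1-F(t)$ and the mass is $\sim U_{\lambda_0}(t)$, or---as the paper does---to keep the same normalized-integral framework and observe that for $\alpha=0$ the limit measure is the Dirac mass at $y=0$, which lies outside the integration window $[1-x_2,1]$ whenever $x_2<1$.
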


\begin{remark}
These formulas are identical to the original ones, which means that the presence of different kinds of renewal times is irrelevant asymptotically.
\end{remark}

\section{An application}
Semi-Markov theory is one of the most efficient area of stochastic
processes to generate applications in real-life problems. We cannot give here a complete view of such applications in the fields of (paraphrasing Barbu and Limnios) Economics, Manpower models. Insurance, Finance, Reliability, Simulation, Queuing, Branching processes. Medicine (including survival data). Social Sciences, Language Modelling, Seismic Risk Analysis, Biology, Computer Science, Chromatography and Fluid mechanics, mainly due to the lack of expertise. (see e.g. \cite{J86} or \cite{JL99}))

Therefore, we present the application, which motivated our model the problems treated. Namely  Random Walks with Internal States in one and two dimensions. Shortly we investigated  continuous time random walks with internal states in which the speed parameter was the internal state changing according to a Markov chain at every visit of the random walk to the origin. In two dimensions, it was a paradigm model to the two disk Lorentz process, i.e. two disks wandering in a periodic scatterer configuration and changing energy when they collide with each other.

It can be shown (cf. an upcoming article of the authors) that the return times to the origin are regularly varying with exponent $\alpha=1/2$ in $d=1$, and slowly varying in $d=2$, i.e
\[
1-F^{d=1}(t)\sim C_1t^{-1/2}\qquad 1-F^{d=2}(t)\sim\frac{C_2}{\log t}
\]
The exact values of the constants are not important now. Suppose for ease that the stationary distribution is uniform. In the physical model $[a,b]=[\sqrt{E},\sqrt{2E}]$, where $E$ is the total energy of the two colliding disks.

Our results yield to the expected number of returns to the origin (number of collisions)
\[
U_{\lambda_0}^{d=1}(t)\sim\frac{\sqrt{t}}{\pi C_1 E^{1/4}(2^{1/4}-1)}\qquad U_{\lambda_0}^{d=2}(t)\sim\frac{\log t}{C_2}
\]
Interesting that the energy dependence vanishes in $d=2$ (and this is not because of the special choice of $\rho_s(A)$ which is a good approximation).

The answer to the question concerning is the asymptotic distribution of the speed is simple as well. Note, that due to our assumption of $\rho_s(A)$, the limit distribution has density
\[
\Phi_{\infty,\lambda_0}^{d=1}(\lambda)=\frac{\lambda^{-1/2}}{2E^{1/4}(2^{1/4}-1)}\qquad \Phi_{\infty,\lambda_0}^{d=2}(\lambda)=\frac{1}{\sqrt{E}(\sqrt{2}-1)}
\]
Finally, $Y_{\lambda_0,t}/t$ and $Z_{\lambda_0,t}/t$ has the limit distribution specified in Theorem \ref{age_lif_thm2}. The meaning for $\alpha=0$ is that the current excursion asymptotically dominates the whole process.
\section{Proofs}

\subsection{Used facts}
For the following proofs, we need the so called Abelian-Tauberian theorems (see \cite{F71} XIII.5).
\begin{fact}[Feller]\label{Taub1}
Let H be a measure on $\reals^+$, $\kappa(z)=\int e^{-zx}dH$ the Laplace transform wrt it and $H(x)\equiv H([0,x])$! Then for $\rho\geq 0$,
\[
\frac{\kappa(t/x)}{\kappa(1/x)}\to t^{-\rho}\qquad x\to\infty
\]
and
\[
\frac{H(tx)}{H(x)}\to t^{\rho}\qquad x\to\infty
\]
imply each other, moreover in this case
\begin{equation}\label{taub_1_cons}
\kappa(1/x)\sim H(x)\Gamma(\rho+1)\qquad x\to\infty
\end{equation}
\end{fact}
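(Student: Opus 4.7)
The plan is to split the statement into its Abelian half (from regular variation of $H$ to the Laplace asymptotics) and its Tauberian half (the converse), with \eqref{taub_1_cons} emerging as a by-product of the Abelian calculation. This is Karamata's classical Tauberian theorem; the Tauberian half is where the real work lies.

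For the Abelian direction, assume $H(tx)/H(x)\to t^{\rho}$. Since a regularly varying function grows subexponentially, integration by parts yields $\kappa(z) = z\int_0^\infty e^{-zx} H(x)\,dx$, and the substitution $u=zx$ produces
\[
\frac{\kappa(z)}{H(1/z)} \;=\; \int_0^\infty e^{-u}\,\frac{H(u/z)}{H(1/z)}\,du.
\]
The integrand converges pointwise to $e^{-u}u^{\rho}$. Potter's bounds for regularly varying functions dominate $H(u/z)/H(1/z)$ by $C\max(u^{\rho+\varepsilon},u^{\rho-\varepsilon})$ once $z$ is small, while monotonicity of $H$ gives the crude bound $H(u/z)/H(1/z)\le 1$ for $u\le 1$; both are integrable against $e^{-u}\,du$. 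Dominated convergence then gives $\kappa(z)/H(1/z)\to\Gamma(\rho+1)$, which is \eqref{taub_1_cons} and immediately implies $\kappa(t/x)/\kappa(1/x)\to t^{-\rho}$.

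For the Tauberian direction, assume $V(x):=\kappa(1/x)$ is regularly varying of index $\rho$ and define the rescaled distribution functions $\mu_x(t) := H(tx)/V(x)$. By hypothesis, for every $c>0$,
\[
\int_0^\infty e^{-cu}\,d\mu_x(u) \;=\; \frac{\kappa(c/x)}{V(x)} \;\longrightarrow\; c^{-\rho}.
\]
Finite linear combinations of the maps $u\mapsto e^{-cu}$, $c>0$, form a subalgebra of $C_0([0,\infty))$ which separates points and vanishes nowhere, so Stone--Weierstrass makes it dense. The displayed convergence therefore extends vaguely to every $f\in C_0$, identifying the vague limit of $\mu_x$ with the unique Radon measure whose Laplace transform is $c^{-\rho}$. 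Monotonicity of the distribution functions $\mu_x$ upgrades vague to pointwise convergence, yielding $\mu_x(t)\to t^{\rho}/\Gamma(\rho+1)$ at every continuity point; specialising to $t=1$ recovers \eqref{taub_1_cons}, and dividing eliminates the prefactor to give $H(tx)/H(x)\to t^{\rho}$.

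The main obstacle is this last step: converting convergence of the Laplace integrals into pointwise convergence of $\mu_x(t)$. Exponential smoothing could in principle erase oscillations of $H$, and it is precisely the monotonicity of $H$ that rules this out — Karamata's decisive observation and the reason the theorem has no analogue for signed measures.
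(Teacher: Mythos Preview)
The paper does not prove this statement: Fact~\ref{Taub1} is quoted verbatim from Feller~\cite{F71}, XIII.5, as an external input, and the section ``Used facts'' merely lists it alongside Facts~\ref{Taub2} and~\ref{Taub3} before moving on to the proofs of the paper's own theorems. There is therefore no ``paper's own proof'' to compare against.

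That said, your sketch is the standard Karamata argument and is essentially correct. The Abelian half is clean; the one point worth tightening in the Tauberian half is the passage from convergence of $\int e^{-cu}\,d\mu_x(u)$ for all $c>0$ to vague convergence of $\mu_x$. Stone--Weierstrass gives density of the exponential algebra in $C_0([0,\infty))$, but to conclude $\int f\,d\mu_x$ converges for every $f\in C_0$ you also need a uniform bound on $\mu_x$ over compacts (otherwise the density does not propagate). This follows at once from the Laplace bound $\mu_x([0,T])\le e^{cT}\kappa(c/x)/V(x)$, which is bounded by hypothesis, but it should be said. With that, the vague limit is identified, and your monotonicity remark correctly upgrades vague to pointwise convergence of the distribution functions at continuity points; since $t\mapsto t^\rho/\Gamma(\rho+1)$ is continuous, this suffices.
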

A popular reformulation of this result is
\begin{fact}\label{Taub2}
If $L$ is slowly varying in infinity and $0\leq\rho<\infty$, then
\[
\kappa(1/x)\sim x^{\rho}L(x)\qquad x\to\infty
\]
and
\[
H(x)\sim\frac{1}{\Gamma(\rho+1)}x^{\rho}L(x)\qquad x\to\infty
\]
imply each other.
\end{fact}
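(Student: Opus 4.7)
The plan is to derive Fact \ref{Taub2} directly from Fact \ref{Taub1}: the two assertions in Fact \ref{Taub2} are precisely the sharpened, explicit-form versions of the two ratio-limit conditions appearing in Fact \ref{Taub1}, and the relation \eqref{taub_1_cons} supplies the multiplicative constant needed to match the slowly varying factor $L$ on both sides.

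First I would establish the forward direction. Assume $H(x) \sim \frac{1}{\Gamma(\rho+1)} x^\rho L(x)$. For any fixed $t>0$, slow variation of $L$ gives
\[
\frac{H(tx)}{H(x)} \sim \frac{(tx)^\rho L(tx)}{x^\rho L(x)} \longrightarrow t^\rho,
\]
which is one of the two equivalent hypotheses of Fact \ref{Taub1}. Hence \eqref{taub_1_cons} applies, yielding
\[
\kappa(1/x) \sim H(x)\,\Gamma(\rho+1) \sim x^\rho L(x),
\]
as required.

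Next I would handle the converse. Assume $\kappa(1/x) \sim x^\rho L(x)$. Setting $y = x/t$ and again using slow variation of $L$,
\[
\frac{\kappa(t/x)}{\kappa(1/x)} = \frac{\kappa(1/y)}{\kappa(1/x)} \sim \frac{(x/t)^\rho L(x/t)}{x^\rho L(x)} \longrightarrow t^{-\rho}.
\]
This is the other hypothesis of Fact \ref{Taub1}, so \eqref{taub_1_cons} is once more in force and gives
\[
H(x) \sim \frac{\kappa(1/x)}{\Gamma(\rho+1)} \sim \frac{1}{\Gamma(\rho+1)} x^\rho L(x).
\]

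The argument is essentially bookkeeping once Fact \ref{Taub1} is in hand; the only delicate point is that $L(tx)/L(x)\to 1$ is being used only pointwise for each fixed $t$, which is exactly the form of input Fact \ref{Taub1} requires. In particular, no uniform convergence theorem for slowly varying functions needs to be invoked, so the derivation stays within the framework already assumed in the paper.
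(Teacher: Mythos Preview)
Your derivation is correct: each hypothesis of Fact~\ref{Taub2} verifies one of the ratio-limit conditions in Fact~\ref{Taub1}, and then \eqref{taub_1_cons} transfers the explicit asymptotic from one side to the other. The paper does not supply its own proof of Fact~\ref{Taub2}; it simply quotes it from \cite{F71} XIII.5 and introduces it as ``a popular reformulation'' of Fact~\ref{Taub1}, which is exactly the relationship your argument makes precise.
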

The following result is Example (c) XIII.5 in \cite{F71}
\begin{fact}\label{Taub3}
For $\rho<1$
\[
1-F(x)\sim\frac{1}{\Gamma(1-\rho)}x^{-\rho}L(x)\qquad\textrm{and}\qquad 1-\varphi(z)\sim z^{\rho}L(1/z)
\]
imply each other.
\end{fact}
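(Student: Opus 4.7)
The plan is to reduce the claim to the Karamata Tauberian theorem (Fact~\ref{Taub2}) together with the monotone density theorem, by passing through the integrated tail
\[
U(x):=\int_0^x(1-F(y))\,dy.
\]
By Fubini,
\[
1-\varphi(z)=\int_0^\infty(1-e^{-zx})\,dF(x)=z\int_0^\infty e^{-zx}(1-F(x))\,dx,
\]
so the Laplace--Stieltjes transform of the non-decreasing function $U$ is $\tilde U(z)=(1-\varphi(z))/z$. The asymptotic $1-\varphi(z)\sim z^{\rho}L(1/z)$ as $z\to 0$ is therefore equivalent to $\tilde U(z)\sim z^{-(1-\rho)}L(1/z)$.

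Next apply Fact~\ref{Taub2} to $U$ with exponent $1-\rho\ge 0$: the preceding asymptotic for $\tilde U$ as $z\to 0$ is equivalent to
\[
U(x)\sim\frac{x^{1-\rho}L(x)}{\Gamma(2-\rho)}\qquad (x\to\infty).
\]
This step bundles all of the genuine Tauberian content into a single invocation.

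It remains to match this asymptotic for $U$ with the prescribed one for $1-F$. The easy (Abelian) direction: if $1-F(x)\sim x^{-\rho}L(x)/\Gamma(1-\rho)$, direct integration of this regularly varying integrand gives $U(x)\sim x(1-F(x))/(1-\rho)\sim x^{1-\rho}L(x)/\Gamma(2-\rho)$ on using $(1-\rho)\Gamma(1-\rho)=\Gamma(2-\rho)$. The converse is the monotone density theorem: since $1-F$ is monotone non-increasing, the asymptotic of $U=\int_0^{\cdot}(1-F)$ can be differentiated, yielding $1-F(x)\sim(1-\rho)x^{-\rho}L(x)/\Gamma(2-\rho)=x^{-\rho}L(x)/\Gamma(1-\rho)$.

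The main obstacle is the monotone density step: extracting the pointwise asymptotic of a monotone function from the asymptotic of its antiderivative requires genuinely using monotonicity to sandwich $1-F(x)$ between difference quotients $[U((1+\varepsilon)x)-U(x)]/(\varepsilon x)$ and $[U(x)-U((1-\varepsilon)x)]/(\varepsilon x)$, evaluating each using regular variation of $U$, and then sending $\varepsilon\to 0^+$. The constraint $\rho<1$ is exactly what guarantees the exponent $1-\rho>0$ needed to apply Fact~\ref{Taub2} in a nondegenerate way and to have a nonzero coefficient $(1-\rho)$ in the monotone density step. Everything else reduces to the identity linking $\tilde U$ to $\varphi$ and the functional-equation manipulation $(1-\rho)\Gamma(1-\rho)=\Gamma(2-\rho)$.
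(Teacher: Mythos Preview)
Your argument is correct. The paper does not give its own proof of this fact; it simply cites it as Example~(c) of XIII.5 in Feller~\cite{F71}, and your reduction via the integrated tail $U(x)=\int_0^x(1-F(y))\,dy$ to Fact~\ref{Taub2} together with the monotone density theorem is exactly the standard route (and essentially Feller's own).
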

%\noindent We also need a technical result (cf. \cite{F71} VIII.9 Theorem 1)
%\begin{fact}\label{usef_form}
%If $K(x)$ is a slowly varying function at infinity and $\beta>-1$ then
%\[
%\int_1^xy^{\beta}K(y)dy=\left(\frac{1}{1+\beta}+o(1)\right)x^{\beta+1}K(x)
%\]
%\end{fact}
\subsection{Proof of Existence\&Uniqueness of the solution}
\begin{proof}[Proof of Theorem \ref{ren_eq_unique}]
Suppose we have two such solutions and denote their difference with $\tilde{\Psi}_{t,\lambda_0}(A)$. This function satisfies the homogeneous version of \eqref{ren_eq}:
\[
\tilde{\Psi}_{t,\lambda_0}(A)=\int_0^t\int_a^b\tilde{\Psi}_{t-s,\lambda_1}(A)g(\lambda_0,d\lambda_1)dF_{\lambda_0}(s)
\]
and clearly $|\tilde{\Psi}_{t,\lambda_0}(A)|<2M_t$ if $|\Psi_{s,\lambda_0}(A)|<M_t$ for $s<t$. If we iterate $n$ times, then through a little manipulation (can be checked by induction), we get
\[
\tilde{\Psi}_{t,\lambda_0}(A)=\int_{[a,b]^n}\int_0^t\tilde{\Psi}_{t-s,\lambda_n}(A)d\bigg((\Pi^*)_{i=0}^{n-1}F_{\lambda_i}(s)\bigg)\prod_{i=0}^{n-1}g(\lambda_i,d\lambda_{i+1})
\]
where $\Pi^*$ denotes the convolution product, so
\[
|\tilde{\Psi}_{t,\lambda_0}(A)|<2M_t\int_{[a,b]^n}\int_0^td\bigg((\Pi^*)_{i=0}^{n-1}F_{\lambda_i}(s)\bigg)\prod_{i=0}^{n-1}g(\lambda_i,d\lambda_{i+1})=
\]
\[
=2M_t\int_{[a,b]^n}(\Pi^*)_{i=0}^{n-1}F_{\lambda_i}(t)\prod_{i=0}^{n-1}g(\lambda_i,d\lambda_{i+1})=2M_t\Prob(S_{n,\lambda_0}<t)
\]
which goes to zero as $n\to\infty$ for all $t$ by \eqref{Alt_def_U} if $U_{\lambda_0}(t)<\infty$ for every finite $t$.  But this follows from the fact it is clearly less than the renewal function of a classical renewal process with distribution function
\[
F(x)=\sup_{\lambda\in[a,b]}F_{\lambda}(x)
\]
which is not the point mass at zero by condition \eqref{F_lambda_condition}. Now the statement follows by the result of ordinary renewal theory.

From the proof of uniqueness, one can deduce that if we iterate in the inhomogeneous equation \eqref{ren_eq}, then the remainder term converges to zero. Thus after some calculation, we get exactly the solution given in the theorem. The convergence of the series \eqref{solution} can be checked by noticing
\[
\Psi_{t,\lambda_0}(A)\leq \tilde{M}_tU_{\lambda_0}(t)
\]
where $|h_{s,\lambda_0}(A)|<\tilde{M}_t$ for $s<t$.
\end{proof}

%The next helpful lemma is a simple generalization of the Scheff� theorem.

%\begin{lemma}
%Let $V(\mu_1,\mu_2)=\sup_{A\in\mathcal{B}([a,b])}|\mu_1(A)-\mu_2(A)|$, i.e. the variational distance of probability measures $\mu_1$ and $\mu_2$. Also define the signed measure $\nu=\mu_1-\mu_2$. Then
%\[
%V(\mu_1,\mu_2)=\frac{1}{2}|\nu|([a,b])
%\]
%where $|\nu|$ is the variation of the signed measure $\nu$.
%\end{lemma}
%\begin{proof}
%Let $\nu=\nu_+-\nu_-$ be the Hahn-Jordan decomposition of the signed measure $\nu$, and $A_+,A_-$ be the corresponding sets. Clearly,
%\begin{align*}
%V(\mu_1,\mu_2)&\geq|\mu_1(A_+)-\mu_2(A_+)|=\nu_+(A_+)=\nu_+([a,b])=\\
%&=\frac{1}{2}(\nu_+([a,b])+\nu_-([a,b]))=\frac{1}{2}|\nu|([a,b])
%\end{align*}
%also
%\begin{align*}
%|\mu_1(A)-\mu_2(A)|=|\nu_+(A)-\nu_-(A)|=|\nu_+(A\cap A_+)-\nu_-(A\cap A_-)|
%\end{align*}
%which is obviously less than or equal to
%\[
%\max\{\nu_+(A\cap A_+),\nu_-(A\cap A_-)\}\leq\max\{\nu_+(A_+),\nu_-(A_-)\}
%\]
%which equals $(1/2)|\nu|([a,b])$ since $\mu_1$ and $\mu_2$ are probability measures.

%\end{proof}
\subsection{Proof of the Key Lemma}

\begin{proof}[Proof of Lemma \ref{key_lemma2}]
Note that
\begin{equation}\label{Perturbation}
I-\hat{\varphi}(z) P=I-P+(I-\hat{\varphi}(z))P
\end{equation}
We will treat the second term as an asymptotic perturbation, where the parameter of the perturbation is $z$.

First consider the $\mu_{\lambda}<\infty$ case. Then for $f\in L_{\infty}([a,b])$
\[
\{(I-\hat{\varphi}(z))f\}(\lambda)=(1-\varphi_{\lambda}(z))f(\lambda)=z\mu_{\lambda}f(\lambda)+o_{\lambda}(z)
\]
where $o_{.}(z)$ is a vector for which $||o_{.}(z)||/z\to 0$ as $z\to 0$. Thus
\begin{equation*}
(I-\hat{\varphi}(z))P=z\mu P+o(z)
\end{equation*}
where $\mu$ is the operator on $L_{\infty}([a,b])$ defined by $(\mu f)(\lambda)=\mu_{\lambda}f(\lambda)$ and the meaning of $o(z)$ is straightforward. Since $1$ is a simple isolated eigenvalue of $P$, which is stable under the perturbation due to the assumed spectral gap (and to the number of eigenvalues on the unit circle being finite), using Theorem 2.6 in Chapter VIII in \cite{K66}, we have that
\begin{equation}\label{spectral_dec}
I-\hat{\varphi}(z)P=(cz+o(z))(\Pi+o(1))+K(z)
\end{equation}
where $\Pi f=(\rho_s,f)\mathbb{1}$, $\nu\Pi=(\nu,\mathbb{1})\rho_s$, and $K(z)$ is the operator arising from the rest of the spectra and projects to the annihilator $A_{\rho_s}$ of $\rho_s$ (see Conjecture 1). Its essential property is that the part of the spectra it is representing is bounded away from zero as $z\to 0$. $o(1)$ is here an operator converging to zero in norm as $z\to 0$.

%Note that $(\rho_s,(I-\hat{\varphi}(z))P\mathbb{1})=(\rho_s,(I-\hat{\varphi}(z))\mathbb{1})$.
By $(I-P)\mathbb{1}=0$, one obtains from \eqref{Perturbation} and \eqref{spectral_dec}
\begin{equation}\label{compute_c}
(\rho_s,(I-\hat{\varphi}(z))P\mathbb{1})=(cz+o(z))(\rho_s,\mathbb{1})^2+o(z)=cz+o(z)
\end{equation}
since $(\rho_s,K(z)\mathbb{1})=o(z)$. To see this, note that
\[
0=(\rho_s+z\rho_1+o(z),K(z)(\mathbb{1}+z\mathbb{1}_1+o(z)))
\]
where $\mathbb{1}(z)=\mathbb{1}+z\mathbb{1}_1+o(z)$ is the perturbed right eigenvector that corresponds to the unperturbed eigenvalue $1$. (These asymptotics are guaranteed by the theorem cited above.) After rearrangement,
\begin{equation}\label{szendvics}
(\rho_s,K(z)\mathbb{1})=-z((\rho_1,K(z)\mathbb{1})+(\rho_s,K(z)\mathbb{1}_1))+o(z)=o(z)
\end{equation}
since $||K(z)\mathbb{1}||, ||\rho_sK(z)||\to 0$.
%Since $K(z)$ also has asymptotic $K_0+zK_1+o(z)$ where $K_0\mathbb{1}=0$ and $\rho_sK_0=0$, one obtains
%\[
%(\rho_1,K(z)\mathbb{1})=z(\rho_1,K_1\mathbb{1})+o(z)
%\]
%and
%\[
%(\rho_s,K(z)\mathbb{1}_1)=(\rho_sK(z),\mathbb{1}_1)=z(\rho_sK_1,\mathbb{1}_1)+o(z)
%\]
%so plugging back to \eqref{szendvics} yields $(\rho_s,K(z)\mathbb{1})=o(z)$.

Using the formula \eqref{spectral_dec},
\[
(I-\hat{\varphi}(z)P)^{-1}=\frac{1}{cz+o(z)}(\Pi+o(1))+\mathcal{O}(1)
\]
where $\mathcal{O}(1)$ is a bounded operator which comes from the spectra of $K(z)$ being bounded away from zero. With a little arrangement and application of \eqref{compute_c},
\[
(I-\hat{\varphi}(z)P)^{-1}=\frac{\Pi}{(\rho_s,(I-\hat{\varphi}(z))\mathbb{1})}(1+o(1))+\mathcal{O}(1)
\]
where we used $P\mathbb{1}=\mathbb{1}$ and $o(1)$ is just a real valued function converging to zero as $z\to 0$.

This yields by \eqref{form_sol}
\[
\hat{\Xi}(z)=(I-\hat{\varphi}(z)P)^{-1}\hat{\phi}(z)=\frac{\Pi\hat{\phi}(z)}{(\rho_s,(I-\hat{\varphi}(z))\mathbb{1})}(1+o(1))+\mathcal{O}(1)\hat{\phi}(z)
\]
and finally by $\rho_s\Pi=\rho_s$
\begin{align*}
(\delta_{\lambda_0},\hat{\Xi}(z)\mathbb{1}_A)&\left(\frac{(\rho_s,\hat{\phi}(z)\mathbb{1}_ A)}{(\rho_s,(I-\hat{\varphi}(z))\mathbb{1})}\right)^{-1}=\\
&=1+o(1)+o(1)\frac{(\delta_{\lambda_0},\mathcal{O}(1)\hat{\phi}(z)\mathbb{1}_ A)}{(\rho_s,\hat{\phi}(z)\mathbb{1}_A)}
\end{align*}
By the assumption of the theorem, the explicitly written factor is bounded for small $z$, so the whole expression goes to 1 as $z\to 0$.

In the infinite mean but regularly or slowly varying scaled type case for $\alpha\in[0,1)$,
\begin{align*}
1-\varphi_{\lambda}(z)&=\left(\frac{z}{\lambda}\right)^{\alpha}\Gamma(1-\alpha)L\left(\frac{\lambda}{z}\right)(1+o(1))=\\
&=z^{\alpha}L(1/z)\Gamma(1-\alpha)\lambda^{-\alpha}(1+o(1))
\end{align*}
where the first equation is due to Fact \ref{Taub3}. Thus
\[
(I-\hat{\varphi}(z))P=z^{\alpha}L(1/z)M_{\alpha}P+o(z^{\alpha}L(1/z))
\]
where $(M_{\alpha}f)(\lambda)=\Gamma(1-\alpha)\lambda^{-\alpha}f(\lambda)$. Repeating the finite mean proof with $z\leftrightarrow z^{\alpha}L(1/z)$ gives the desired result.

In the remaining $\alpha=1$ case, we have by the Lemma on p.280 in \cite{F71}, that $H(t)=\int_0^t(1-F(s))ds$ is a slowly varying function. Thus by Fact \ref{Taub2},
\[
1-\varphi(z)=zH(1/z)(1+o(z))
\]
so $(1-\hat{\varphi}(z))P=zH(1/z)M_1+o(zH(1/z))$. Note that $zH(1/z)\to 0$ as $z\to 0$, and again by the virtue of the finite mean proof, we are ready.
\qed
\end{proof}

\begin{proof}[Proof of Corollary \ref{key_lemma_corollary}]
Introduce $\mu_{\infty}=\{\lambda\in[a,b]: \mu_{\lambda}=\infty\}$. What we will show is that $\rho_s(\mu_{\infty})=0$ implies that $\mu_{\infty}$ can be almost literally  dropped  from the state space and thus the assertion. 

If $A_{\rho_s}$ is an invariant subspace of $\hat{\phi}(z)$, then it is also an invariant subspace of $\hat{\Xi}(z)$ as well and thus by $\mathbb{1}_{A\cap\mu_{\infty}}\in A_{\rho_s}$
\[
(\rho_s,\hat{\Xi}(z)\mathbb{1}_A)=(\rho_s,\hat{\Xi}(z)\mathbb{1}_{A\cap \mu_{\infty}^c})
\]

Note also that we can assume $\lambda_0\notin \mu_{\infty}$ since with probability one, ther is an $n$ for which $\Lambda_n\notin \mu_{\infty}$ and we can consider the process launched from there. Then

\[
(\lambda_0,\hat{\Xi}(z)\mathbb{1}_A)=(\lambda_0,\hat{\Xi}(z)\mathbb{1}_{A\cap\mu_{\infty}^c})
\]
This implies that  we only have to work in the subspace spanned by the functions in $L_{\infty}([a,b])$ that does vanish on $\mu_{\lambda}$.
\qed
\end{proof}
\subsection{Proof of the results for $U_{\lambda_0}(t,A)$}
\begin{proof}[Proof of Theorem \ref{Thm_U_asym}]
In the finite mean case we $\rho_s$-almost everywhere have
\[
1-\varphi_{\lambda}(z)=\mu_{\lambda}z+o_{\lambda}(z)
\]
where $\int_a^bo_{\lambda}(z)d\rho_s(\lambda)=o(z)$. This latter can be seen by noting that $\int_a^b\varphi_{\lambda}d\rho_s(\lambda)$ is the Laplace transform of the mixture of $F$-s with respect to $\rho_s(\lambda)$ which is a proper distribution function.

Plugging this to \eqref{Lemma1_to_U} and observing that Corollary \ref{key_lemma_corollary} applies here, we obtain
\[
\omega_{\lambda_0}(z,A)-\mathbb{1}_A(\lambda_0)\sim\frac{1}{z}\frac{\rho_s(A)}{\int_a^b\mu_{\lambda}d\rho_s(\lambda)+o(1)}\sim\frac{1}{z}\frac{\rho_s(A)}{\mu_{\rho_s}}
\]
Using Fact \ref{Taub1}, the proof is ready.

To see the the case when $\alpha\in(0,1)$, note that by Fact \ref{Taub3} and the bounded convergence theorem,
\[
\int_a^b(1-\varphi_{\lambda}(z))d\rho_s(\lambda)=\Gamma(1-\alpha)z^{\alpha}L(1/z)(1+o(1))\int_a^b\lambda^{-\alpha}d\rho_s(\lambda)
\]
By virtue of Fact \ref{Taub2} and by noting that
\[
\frac{1}{\Gamma(1-\alpha)\Gamma(1+\alpha)}=\frac{\sin(\pi\alpha)}{\pi\alpha}
\]
the statement of the theorem is obtained.
In the $\alpha=0$ case,
\[
1-\varphi(az)\leq\int_a^b(1-\varphi_{\lambda}(z))d\rho_s(\lambda)\leq 1-\varphi(bz)
\]
where both the lower and upper bounds are $\sim 1-\varphi(z)$ since they are slowly varying by Fact \ref{Taub3}.
For the remaining $\alpha=1$ case, we again have by \cite{F71} p.280 that
\[
\int_0^t(1-F(s))ds=\tilde{L}(t)
\]
is a slowly varying function. Then by Fact \ref{Taub3},
\[
1-\varphi(z)\sim z\tilde{L}(1/z)\qquad z\to 0
\]
so
\[
\omega_{\lambda_0}(z,A)-\mathbb{1}_A(\lambda_0)\sim\frac{1}{z\tilde{L}(1/z)}\frac{\rho_s(A)}{\int_a^b\lambda^{-1}d\rho_s(\lambda)}
\]
This finishes the proof again by Fact \ref{Taub1}. 

To check the last assertion, note that
\[
U_{\lambda_0}(t,A)(1-F(t))=U_{\lambda_0}(t,A)t^{-1}\tilde{L}(t)\frac{t(1-F(t))}{\int_0^t(1-F(s))ds}
\]
Here the first term is finite by what just has been proved while
\[
\int_0^t(1-F(s))ds=t(1-F(t))+\int_0^tsdF(s)
\]
\qed
\end{proof}

\subsection{Proof of the results for $\Phi_{t,\lambda_0}(A)$}
\begin{proof}[Proof of Theorem \ref{fin_exp_asym}]
Since $\mu_{\lambda}<\infty$ except for a $\rho_s$ negligible set, we again have the asymptotic expansion
\[
\varphi_{\lambda}(z)=1-\mu_{\lambda}z+o_{\lambda}\left(z\right)\qquad z\to 0
\]
so by noting that again Corollary $\ref{key_lemma_corollary}$ applies
\[
\Xi_{\lambda_0}(z,A)\sim\frac{1}{z}\frac{\int_A\mu_{\lambda}d\rho_s(\lambda)}{\int_a^b\mu_{\lambda'}d\rho_s(\lambda')}\equiv \frac{1}{z}K(A)\qquad z\to 0
\]
By \eqref{taub_1_cons}, this implies
\begin{equation}\label{utso}
\lim_{t\to\infty}\frac{1}{t}\int_0^t\Phi_{t,\lambda_0}(s)ds=K(A)>0
\end{equation}
which equivalent to what we are seeking by simple arguments.
% If
%\[
%\limsup_{t\to\infty}\Phi_{t,\lambda_0}(A)=l_1\qquad\liminf_{t\to\infty}\Phi_{t,\lambda_0}(A)=l_2
%\]
%and $t_n^1$, $t_n^2$ are sequences on which $\Phi_{t_n^i,\lambda_0}(A)\to l_i$, then it is easy to see from \eqref{utso} that $l_1=l_2=K(A)$.

%If $\rho_s(A)=0$, then $\rho_s(A^c)=1$, so $\Phi_{t,\lambda_0}(A^c)\to 1$, and since $\Phi_{t,\lambda_0}$ is a measure, $\Phi_{t,\lambda_0}(A)\to 0$.

In the scaled type case we get the desired formula by  $\mu_{\lambda}=\mu/\lambda$.

In the scaled type, regularly or slowly varying case, the proof is essentially similar to the proof of Theorem \ref{Thm_U_asym}.
\qed
\end{proof}

%\begin{proof}[Proof of Lemma \ref{Finite_looser}]
%In
%\end{proof}

%\begin{proof}[Proof of Theorem \ref{infin_exp_asym}]
%Since by Fact \ref{usef_form}
%\[
%\int_0^t(1-F_1(s))ds=\int_0^ts^{-\alpha}L(s)ds=\left(\frac{1}{1-\alpha}+o(1)\right)t^{-\alpha+1}L(t)
%\]
%Fact \ref{Taub2} then implies
%\[
%\phi_{\lambda}(z)=\frac{1-\varphi(z/\lambda)}{z}=\frac{z^{\alpha-1}}{1-\alpha}\tilde{L}(1/z)
%\]
%Multiplying both the numerator and the denominator by $1-\varphi_1(z)$ in \eqref{W_asym}, referring to the bounded convergence theorem, and using Fact \ref{Taub1}, this implies
%\[
%W_{\lambda_0}(z,A)\sim \frac{\int_A\lambda^{-\alpha}d\rho_s(\lambda)}{\int_a^b\lambda'^{-\alpha}d\rho_s(\lambda')}
%\]
%From here, one can finish the proof exactly as in the finite mean case.
%\end{proof}

\subsection{Proof of age and lifetime results}
\begin{proof}[Proof of Theorem \ref{age_lif_thm}]
Through the same procedure as in the proof of Theorem \ref{fin_exp_asym} in the finite mean case.
\qed
\end{proof}

\begin{proof}[Proof of Theorem \ref{age_lif_thm2}]
Using Theorem \ref{Thm_U_asym}, we have that if $\alpha\in[0,1)$,
\begin{equation}\label{U_help}
(1-F(t))U_{\lambda_0}(t)\to\frac{\sin\pi\alpha}{\pi\alpha}\frac{1}{\int_a^b\lambda^{-\alpha}d\rho_s(\lambda)}
\end{equation}
By the same arguments as in \cite{F71} p.472, we have
\begin{align*}
\Prob(tx_1<&Y_{\lambda_0,t}<tx_2)=\\
&=\sum_{n=0}^{\infty}\Prob(\cup_{y\in[1-x_2,1-x_1]}\{S_{\lambda_0,n}=ty\}\cap\{X_{\Lambda_{n+1}}>t(1-y)\})
\end{align*}
which can be seen to equal
\[
\int_{[1-x_2,1-x_1]\times[a,b]}(1-F(\lambda t(1-y)))U_{\lambda_0}(tdy,d\lambda)
\]
By \eqref{U_help}, this is asymptotically equal to
\begin{align}\label{asympt_dynkin}
\frac{\sin(\pi\alpha)}{\pi\alpha}&\frac{1}{\int_a^b\lambda^{-\alpha}d\rho_s(\lambda)}\cdot\\
\nonumber&\cdot\int_{[1-x_2,1-x_1]\times[a,b]}\frac{(1-F(\lambda t(1-y)))}{1-F(t)}\frac{U_{\lambda_0}(tdy,d\lambda)}{U_{\lambda_0}(t)}
\end{align}
In the $\alpha\in(0,1)$ case, the first term in the integral approaches $\lambda^{-\alpha}(1-y)^{-\alpha}$, while
\begin{equation}\label{approach}
\frac{{U}_{\lambda_0}(ty,A)}{U_{\lambda_0}(t)}\to y^{\alpha}\rho_s(A)\qquad\Rightarrow\qquad \frac{U_{\lambda_0}(tdy,d\lambda)}{U_{\lambda_0}(t)}\to\alpha y^{\alpha-1}d\rho_s(\lambda)dy
\end{equation}
as $t\to\infty$. The latter can be seen by noting that Fact \ref{Taub1} and Theorem \ref{Thm_U_asym} yield
\[
\frac{1}{U_{\lambda_0}(t)}\int_{[0,{\infty}]\times A}e^{-zy}U_{\lambda_0}(tdy,d\lambda)=\frac{\omega_{\lambda_0}(z/t,A)}{U_{\lambda_0}(t)}\to\frac{\Gamma(\alpha+1)}{z^{\alpha}}\rho_s(A)
\]
as $t\to\infty$, which is the Laplace transform in the time variable of the measure in \eqref{approach}.

If $\alpha=0$, then the first term goes to one everywhere except $y=1$, while the Laplace transform above is just $1$, which means that the underlying measure converges weakly to the point mass at $y=0$.

Because of monotonicity, the approach is uniform and thus we have for $\alpha\in(0,1)$, that
\[
\lim_{t\to\infty}\Prob(tx_1<Y_{\lambda_0,t}<tx_2)=\frac{\sin(\pi\alpha)}{\pi}\int_{1-x_2}^{1-x_1}(1-y)^{-\alpha}y^{\alpha-1}dy
\]

%and the lower bound is obtained similarly. Now we use the rather heuristic argument (This is why the proof is only partial) that
%\[
%\lim_{t\to\infty}\Prob(tx_1<Y_{\lambda_0,t}<tx_2)=\frac{\sin(\pi\alpha)}{\pi}C(\alpha)\int_{1-x_2}^{1-x_1}(1-y)^{-\alpha}y^{\alpha-1}dy
%\]
%If this holds, $C(\alpha)\equiv 1$ follows from the normalization.

If $\alpha=0$ and we choose $x_1=0$, then we get zero. Since $x_2>0$ is arbitrary, the desired result is obtained.

In the remaining $\alpha=1$ case, Theorem \ref{Thm_U_asym} implies
\[
t^{-1}U_{\lambda_0}(t)\int_0^t(1-F(s))ds\to\frac{1}{\int_a^b\lambda^{-1}d\rho_s(\lambda)}
\]
so instead of \eqref{asympt_dynkin}, we have
\[
\frac{1}{\int_a^b\lambda^{-1}d\rho_s(\lambda)}\int_{[1-x_2,1-x_1]\times[a,b]}\frac{1-F(\lambda t(1-y))}{t^{-1}\int_0^t(1-F(s))ds}\frac{U_{\lambda_0}(tdy,d\lambda)}{U_{\lambda_0}(t)}
\]
Similarly as before, the measure wrt we are integrating can be shown to weakly converge  to the point mass on $y=1$. If $y\neq 1$, the first fraction in the integrand is asymptotically equal to
\[
\frac{1}{\lambda(1-y)}\frac{1-F(t)}{t^{-1}\int_0^t(1-F(s))ds}
\]
which can be shown to approach zero as $t\to\infty$ by partial integration. If we set $x_2=1$, then since $x_1>0$ is arbitrary, the proof is ready.

The result for the residual lifetime can be obtained through similar modification of the above calculation as in \cite{F71}.
\qed
\end{proof}

\noindent\textbf{Acknowledgement}

We would like to thank P. N\'andori for pointing out numerous typos and helping us improve the original manuscript. D. Sz. is also grateful to Hungarian National Foundation for Scientific Research grants No. T 046187, K 71693,
NK 63066 and TS 049835

\end{document}